\documentclass{amsart}
\usepackage{amssymb,amsthm,amsmath}
\usepackage{amssymb,amsthm,amsmath,hyperref}
\usepackage[none]{hyphenat}
\usepackage[ansinew]{inputenc}
\usepackage[T1]{fontenc}

\newtheorem{theorem}{Theorem}
\newtheorem{proposition}{Proposition}

\begin{document}

\title[Lagrangian Structure]{Lagrangian Structure for Compressible Flow in the Half-space with the Navier Boundary Condition}

\author{Marcelo M. Santos}
\address{Departamento de Matem\'atica, IMECC-UNICAMP\\
({\small Instituto de Matem\'atica, Estat\'\i stica
e Computa\c c\~ao Cient\'\i fica}\\
-{\small Universidade Estadual de Campinas})\\
Rua S\'ergio Buarque de Holanda, 651\\
13083-859 - Campinas - SP, Brazil.}
\email{msantos@ime.unicamp.br}

\author{Edson J. Teixeira}
\thanks{E.J. Teixeira thanks CAPES--Coordena\c c\~ao de Aperfei\c coamento de Pessoal de
N\'\i vel Superior, Brazil, for financial support during this
research.}
\address{Departamento de Matem\'atica, UFV (Universidade Federal de Vi\c cosa)\\
Av. PH. Rolfs, s/n\\
36570-900 - Vi\c cosa - MG, Brazil.}
\email{edson.teixeira@ufv.br}

\maketitle

\begin{abstract}
We show the uniqueness of particle paths of a velocity field, which solves the compressible isentropic Navier-Stokes equations in the half-space $\mathbb{R}_+^3$ with the Navier boundary condition. In particular, by means of energy estimates and the assumption of small energy we prove that the velocity field satisfies the necessary regularity needed to prove the uniqueness of particle paths.
\end{abstract}

{\it 2010 Mathematics Subject Classification}. 35Q30, 76N10, 35Q35, 35B99.

{\it Keywords and phrases}. Navier-Stokes equations; lagrangian structure; Navier boundary condition.

\section{Introduction}
\label{introduction}

This paper concerns the lagrangian structure of the solution obtained by D. Hoff
\cite{HalfSpace} to the Navier-Stokes system for compressible fluids in the half-space
$\mathbb{R}_+^3=\{x=(x_1,x_2,x_3)\in\mathbb{R}^3\, ;\, \ x_3>0\}$ with the Navier boundary condition.
We follow the approach of \cite{Santos}. Due to the presence of the boundary we analyze and show new estimates. For instance, to estimate the $L^p$ norm of the second derivative of a part of the velocity field, which is denoted by $u_{F,\omega}$, we need to consider a {\em singular kernel} on $\partial\mathbb{R}^3_+$, which we deal with the help of Agmon-Douglis-Nirenberg's theorem (Theorem \ref{Singular_fronteira} below). In fact, this part of the velocity field satisfies a boundary value problem in the half-space (see \eqref{bvp1}) for which we use the explicit formulas given by the Green's functions for the half-space with Dirchlet and Neumann boundary conditions (see \eqref{solution dirichlet} and \eqref{solution neumann}). The half-space have several properties we use. In addition to the explicit formulas for Green's functions, it has a {\em strong m-extension
operator}. This property implies that several classical inequalities on $\mathbb{R}^n$ holds also
on $\mathbb{R}^n_+$. In particular, it is very useful the imbedding inequality \eqref{limitacao} and 
the interpolation inequality \eqref{des_interpolacao}, which we can infer from the similar inequalities on $\mathbb{R}^n$. These and other results we shall need are explained in details in Section \ref{preliminaries}. The crucial result, as it is in \cite{Santos}, is the estimates \eqref{expoente1} and \eqref{expoente2} stated in Theorem \ref{main 1}. To show this inequalities with the presence of the boundary (in \cite{Santos} it is considered only the initial value problem) we use the results displayed in Section \ref{preliminaries} and arguments in the papers \cite{Hoff1995,Hoff2002,HalfSpace,Xiangdi}. In particular, to prove Proposition 
\ref{prop3} and Theorem \ref{theorem 2-s} we use some arguments in the proof of \cite[Lemma 3.3]{Xiangdi}.  

Let us describe in more details the results we show in this paper.
First, for the reader convenience, let us recall the solution obtained in \cite{HalfSpace}. Consider the Navier-Stokes equations
\begin{equation}
\label{equations int}
\left\{\begin{array}{l}
\rho_t+\operatorname*{div}(\rho u)=0\\
(\rho u^j)_t+\operatorname*{div}(\rho u^ju)+P(\rho)_{x_j}=\mu\Delta u^j+\lambda \operatorname*{div} u_{x_j}+\rho f^j, \quad j=1,2,3
\end{array}\right.
\end{equation}
for $x\in\mathbb{R}_+^3$ and $t>0$ with the Navier boundary condition
\begin{equation}
\label{boundary int}
u(x,t)=K(x)(u^1_{x_3}(x,t),u^2_{x_3}(x,t),0),
\end{equation}
for $x=(x_1,x_2,0)\in \partial\mathbb{R}_+^3$, $t>0$, and initial condition
\begin{equation}
\label{initial}
(\rho,u)\big|_{t=0}=(\rho_0,u_0),
\end{equation}
where $\rho$ and $u=(u^1,u^2,u^3)$ are, respectively, the unknowns density and velocity vector field of the fluid modeled by these equations.
$P(\rho)$ is the pressure function, $f=(f^1,f^2,f^3)$ is a given external force density, $\mu$ and $\lambda$ are constant viscosities, and $K$ is a smooth and strictly positive function.
Assume that these functions satisfy the following conditions:
\ for fixed $\tilde\rho, \bar\rho$ such that $0<\tilde\rho<\bar\rho$,
\begin{equation}
P\in C^2([0,\bar\rho]), \ P(0)=0, \ P'(\tilde\rho)>0, \
(\rho-\tilde\rho)[P(\rho)-P(\tilde\rho)]>0, \rho\neq\tilde\rho, \rho\in[0,\bar\rho];
\label{pressure}
\end{equation}
\begin{equation}
\mu>0, \quad 0<\lambda<5\mu/4;
\label{viscosities}
\end{equation}
\begin{equation}
K\in (W^{2,\infty}\cap W^{1,3})(\mathbb{R}^2), \quad K(x)\geq \underline{K}>0,
\label{k-condition}
\end{equation}
for some constant $\underline{K}>0$;
\begin{equation}
\int_{\mathbb{R}_+^3}\left[\dfrac{1}{2}\rho_0|u_0|^2+G(\rho_0)\right]dx \le C_0
\label{initial_energy}
\end{equation}
and
\begin{equation}
\sup\limits_{t\geq 0}\|f(.,t)\|_2  +  \int_0^\infty (\|f(.,t)\|_2+\sigma^7\|\nabla f(.,t)\|_4)dt
  +  \int_0^\infty\int_{\mathbb{R}_+^3}(|f|^2+\sigma^5|f_t|^2)dxdt \le C_f,
\label{force_energy}
\end{equation}
where $C_0$ and $C_f$ are positive numbers sufficiently small,
$G(\rho):=\rho\int_{\tilde\rho}^{\rho}\dfrac{P(s)-P(\tilde\rho)}{s^2}ds$ and $\sigma(t):=\min\{t,1\}$, and; the quantitity
\begin{equation}
M_q:=\int_{\mathbb{R}_+^3}\rho_0|u_0|^q+\sup_{t>0}\|f(\cdot,t)\|_q
+\int_0^\infty\int_{\mathbb{R}_+^3}|f|^qdxdt
\label{Mq}
\end{equation}
is finite, where $q>6$ and satisfies
\begin{equation}
\frac{(q-2)^2}{4(q-1)}<\frac{\mu}{\lambda}.
\label{q}
\end{equation}
Here and throughout the paper, $\|\cdot\|_p$ stands for the $L^p$ norm in $\mathbb{R}_+^n$.

Under these conditions, D. Hoff \cite[Theorem 1.1]{HalfSpace} established the existence of a {\lq\lq}small energy{\rq\rq} ($C_0, C_f$ are sufficiently small) weak solution $(\rho,u)$ to
\eqref{equations int}-\eqref{initial} as follows. Given a positive number $M$ (not necessarily small)
and given $\bar\rho_1\in (\tilde\rho,\bar\rho)$, there are positive numbers $\varepsilon$ and $\bar C$
depending on $\tilde\rho,\bar\rho_1,\bar\rho,P,\lambda,\mu,q,M$ and on the function $K$, and there
is a positive universal constant $\theta$, such that, if
$0\le\inf_{\mathbb{R}_+^3}\rho_0\le\sup_{\mathbb{R}_+^3}\rho_0\le\bar\rho_1$, $C_0+C_f\le\varepsilon$ and $M_q\le M$ then there is a weak solution $(\rho,u)$ to \eqref{equations int}-\eqref{initial} satisfying
(among other properties): the functions $u$, $F=(\lambda+\mu) \operatorname*{div} u-P(\rho)+P(\tilde\rho)$ (the
{\em effective viscous flow}) and $\omega^{j,k}=u_{x_k}^{j}-u_{x_j}^{k}, \, j,k=1,2,3$
($\omega=(\omega^{j,k})$ is the vorticity matrix) are H\"older continuous in
$\overline{\mathbb{R}_+^3}\times[\tau,\infty)$, for any $\tau>0$;
$C^{-1}\inf\rho_0\leq \rho\leq\bar\rho$ a.e. and;
\begin{eqnarray}
&\sup\limits_{t>0}\int_{\mathbb{R}_+^3}[\dfrac{1}{2}\rho(x,t)|u(x,t)|^2
+|\rho(x,t)-\tilde\rho|^2+\sigma(t)|\nabla u(x,t)|^2]dx\\
& +  \int_0^\infty\int_{\mathbb{R}_+^3}\left[|\nabla u|^2
+\sigma^3(t)|\nabla\dot u|^2\right]dxdt\nonumber\\
& \leq  \bar C(C_0+C_f)^\theta,\nonumber
\label{theorem_estimate}
\end{eqnarray}
where $\dot u=u_t+(\nabla u)u$ (the {\em convective derivative} of $u$). In addition, in the case that
$\inf_{\mathbb{R}_+^3}\rho_0>0$, the term $\int_0^\infty\int_{\mathbb{R}_+^3}\sigma|\dot u|^2dxdt$
may be included on the left side of \eqref{theorem_estimate}.

In this paper we show the following results.

\begin{proposition}
\label{decomposition}
Let the assumptions \eqref{pressure}-\eqref{q} be in force. Then the above
vector field $u$ can be written as $u=u_P+u_{F,\omega}$ for some vector fields
$u_{F,\omega},u_P$ satisfying:
\begin{equation}
\|\nabla u_P\|_p\leq C \|P-\tilde P\|_p,
\label{nabla uP}
\end{equation}
\begin{equation}
\|\nabla u_{F,\omega}\|_p
               \leq C(\|F\|_p+\|\omega\|_p+\|P-\tilde P\|_p+\|u\|_p)
\label{primeira_derivada_uFw}
\end{equation}
and
\begin{equation}
\|D^2 u_{F,\omega}\|_p \leq
C(\|\nabla F\|_p+\|\nabla\omega\|_p+\|F\|_p+\|\omega\|_p+\|P-\tilde P\|_p+\|u\|_p),
\label{segunda_derivada_uFw}
\end{equation}
for any $p\in (1,\infty)$, where $C$ is a constant depending on $n$, $p$ and  on
arbitrary positive numbers $\underline{K},\overline{K}$ such that
$\underline{K}\le K \le \overline{K}$.
\end{proposition}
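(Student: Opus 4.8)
The plan is to realize the splitting $u=u_P+u_{F,\omega}$ by first peeling off a curl‑free field carrying the pressure and then letting $u_{F,\omega}:=u-u_P$ solve an elliptic boundary value problem driven by $F$ and $\omega$. First I would define $u_P=\nabla\phi$, where $\phi$ solves $(\lambda+\mu)\Delta\phi=P-\tilde P$, so that $(\lambda+\mu)\operatorname{div}u_P=P-\tilde P$ and $\operatorname{curl}u_P=0$. Solving this on all of $\mathbb R^3$ for the strong $m$-extension of $P-\tilde P$ and restricting, one has $\nabla u_P=\nabla^2\phi$ equal to a double Riesz transform of $(P-\tilde P)/(\lambda+\mu)$; the $L^p$ boundedness of such singular integrals, together with the boundedness of the extension operator, gives \eqref{nabla uP} at once, with no loss of derivatives.

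Next I would identify the system satisfied by $u_{F,\omega}$. From the pointwise identity $\Delta u^j=(\operatorname{div}u)_{x_j}+\sum_k\omega^{j,k}_{x_k}$ and $(\lambda+\mu)\operatorname{div}u=F+(P-\tilde P)$ one obtains $\mu\Delta u+\lambda\nabla\operatorname{div}u=\nabla F+\nabla(P-\tilde P)+\mu\operatorname{div}\omega$ (with $(\operatorname{div}\omega)^j=\sum_k\omega^{j,k}_{x_k}$), while by construction $\mu\Delta u_P+\lambda\nabla\operatorname{div}u_P=(\lambda+\mu)\nabla\operatorname{div}u_P=\nabla(P-\tilde P)$. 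Subtracting gives $\operatorname{div}u_{F,\omega}=F/(\lambda+\mu)$ and $\operatorname{curl}u_{F,\omega}=\operatorname{curl}u$, hence componentwise $\Delta u^j_{F,\omega}=F_{x_j}/(\lambda+\mu)+\sum_k\omega^{j,k}_{x_k}$; this is the boundary value problem \eqref{bvp1}. The Navier condition \eqref{boundary int} splits into a Dirichlet condition for the normal component ($u^3=0$) and a Robin‑type condition for the tangential ones ($u^j=Ku^j_{x_3}$), so that, after subtracting the trace of $u_P$, the boundary data of $u_{F,\omega}$ are prescribed in terms of traces of $u$ and of $u_P$.

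I would then represent $u_{F,\omega}$ componentwise through the explicit half‑space Green's functions \eqref{solution dirichlet} (Dirichlet, for $u^3_{F,\omega}$) and \eqref{solution neumann} (Neumann/Robin, for $u^1_{F,\omega},u^2_{F,\omega}$). For \eqref{primeira_derivada_uFw} I would integrate by parts to move the derivative off the source, writing $\nabla$ of the volume term as a singular integral acting on $F$ and $\omega$ themselves; its $L^p$ bound produces $\|F\|_p+\|\omega\|_p$, while the boundary integrals, controlled through the traces appearing in the Navier condition and the trace of $u_P$, contribute the lower‑order terms $\|P-\tilde P\|_p+\|u\|_p$. For \eqref{segunda_derivada_uFw} I would differentiate once more: the volume term now yields $\nabla F$ and $\nabla\omega$ after integration by parts, but the second derivatives of the boundary integral generate a non‑convolution kernel that is genuinely singular on $\partial\mathbb R^3_+$.

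The main obstacle is exactly this boundary singular integral: it is not a standard Calderón–Zygmund convolution operator, so the whole‑space argument of \cite{Santos} does not transfer verbatim. I would handle it by recasting it into the form covered by the Agmon–Douglis–Nirenberg theorem (Theorem \ref{Singular_fronteira}), checking the required homogeneity and mean‑value cancellation of the kernel arising from the second derivatives of the Dirichlet and Neumann Green's functions; this gives its $L^p$ boundedness and hence the principal bound $\|\nabla F\|_p+\|\nabla\omega\|_p$, the remaining boundary contributions producing the lower‑order $\|F\|_p+\|\omega\|_p+\|P-\tilde P\|_p+\|u\|_p$. Assembling the three estimates \eqref{nabla uP}, \eqref{primeira_derivada_uFw} and \eqref{segunda_derivada_uFw} then completes the proof.
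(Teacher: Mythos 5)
Your construction of $u_P$ as a pure gradient $\nabla\phi$, obtained by solving on all of $\mathbb{R}^3$ for an extension of $P-\tilde P$ and restricting, is where the argument breaks. Because your $u_P$ carries no boundary conditions, the boundary data of $u_{F,\omega}=u-u_P$ are contaminated by traces of $u_P$: the Dirichlet datum of $u^3_{F,\omega}$ becomes $-\phi_{x_3}\big|_{\partial\mathbb{R}^3_+}$ and the Neumann data of the tangential components contain $\phi_{x_jx_3}\big|_{\partial\mathbb{R}^3_+}$. For the second-derivative estimate these boundary contributions are \emph{not} lower order: applying Theorem \ref{Singular_fronteira} (or estimating $D^2$ of the corresponding layer potentials) requires the datum in the trace seminorm $\langle\cdot\rangle_{1-1/p,p}$, which for $\phi_{x_jx_3}$ costs $\|D^2u_P\|_p$, i.e.\ essentially $\|\nabla(P-\tilde P)\|_p$ --- a quantity absent from the right-hand side of \eqref{segunda_derivada_uFw}, and one you cannot afford, since $\nabla P$ is not controlled for Hoff solutions. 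The paper removes this obstruction at the level of the definition: $u_P$ is defined as the solution of the half-space problem \eqref{up} with the \emph{homogeneous} mixed conditions $u_P^3=(u_P^1)_{x_3}=(u_P^2)_{x_3}=0$, chosen precisely so that $u_{F,\omega}$ inherits boundary data involving only $u$ itself, namely $u^3_{F,\omega}=0$ and $(u^j_{F,\omega})_{x_3}=K^{-1}u^j$ for the tangential components (see \eqref{bvp1}); then the Agmon--Douglis--Nirenberg theorem applies with $\langle K^{-1}u^j\rangle_{1-1/p,p}\leq C\|\nabla(K^{-1}\tilde u^j)\|_p\leq C(\|u\|_p+\|\nabla u\|_p)$, as in \eqref{2nabla neumann}.

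A second gap is your claim that the boundary integrals with datum $K^{-1}u^j$ ``contribute the lower-order terms $\|P-\tilde P\|_p+\|u\|_p$'' to the first-derivative estimate. This is unjustified and cannot hold as stated: a bound of the form $\|\nabla(\mbox{layer potential})\|_{L^p(\mathbb{R}^3_+)}\leq C\|h\|_{L^p(\partial\mathbb{R}^3_+)}$ fails already by scaling (the two norms scale inhomogeneously), and in any case the trace of $u$ on $\partial\mathbb{R}^3_+$ is not controlled by $\|u\|_{L^p(\mathbb{R}^3_+)}$ alone. The paper does not prove \eqref{primeira_derivada_uFw} through the representation formula at all: it uses the trivial bound $\|\nabla u_{F,\omega}\|_p\leq\|\nabla u\|_p+\|\nabla u_P\|_p$ (see \eqref{nabla ufw depending on nabla u}) and then invokes the estimate \eqref{1nabla uFw} for $\|\nabla u\|_p$ coming from the boundary value problem \eqref{bvp2}, whose Robin part $Ku^j_{x_3}=u^j$ is handled by the change of variable $V=\varphi v$ with $\varphi$ coinciding with $e^{-K^{-1}x_3}$ on $\partial\mathbb{R}^3_+$, reducing it to a homogeneous Neumann problem and yielding \eqref{navier/estimates}; this is the mechanism by which $\|u\|_p$ legitimately enters both \eqref{primeira_derivada_uFw} and \eqref{segunda_derivada_uFw}. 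Your Riesz-transform argument for \eqref{nabla uP}, your identification of the interior equation in \eqref{bvp1}, and your anticipation of the role of the Agmon--Douglis--Nirenberg theorem are all sound, but without the paper's choice of boundary conditions for $u_P$ and the Robin reduction, neither \eqref{primeira_derivada_uFw} nor \eqref{segunda_derivada_uFw} follows from what you wrote.
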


\begin{theorem}
\label{main 1}
Let the assumptions \eqref{pressure}-\eqref{q} be in force. Suppose also that $u_0$ belongs to the
Sobolev space $H^s(\mathbb{R}_+^3)$, for some $s\in [0,1]$, and $\inf_{\mathbb{R}_+^3}\rho_0>0$. Then the above Hoff solution $(\rho,u)$ to the problem \eqref{equations int}-\eqref{initial} satisfies the estimates:
\begin{equation}
\sup\limits_{t>0}\sigma^{1-s}\int_{\mathbb{R}_+^3}|\nabla u|^2dx + \int_0^\infty\int_{\mathbb{R}_+^3}\sigma^{1-s}\rho|\dot u|^2dxdt\leq C(s)(C_0+\|u_0\|_{H^s}+C_f)^\theta
\label{expoente1}
\end{equation}
and
\begin{equation}
\sup\limits_{t>0}\sigma^{2-s}\int_{\mathbb{R}_+^3}\rho|\dot u|^2dx + \int_0^\infty\int_{\mathbb{R}_+^3}\sigma^{2-s}|\nabla\dot u|^2dxdt\leq C(s)(C_0+\|u_0\|_{H^s}+C_f)^\theta,
\label{expoente2}
\end{equation}
where $C(s)$ is a constant depending on the same quantities as $\bar C$ above and on $s$.
\end{theorem}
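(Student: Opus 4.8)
The plan is to establish \eqref{expoente1} and \eqref{expoente2} by two coupled weighted energy estimates, choosing the powers $1-s$ and $2-s$ of $\sigma$ precisely so as to reproduce, at the nonlinear level, the parabolic smoothing $H^s\to H^1$ enjoyed by the principal part of \eqref{equations int}. It is natural to split the time axis: on $[1,\infty)$ one has $\sigma\equiv1$ and $\sigma'\equiv0$, so \eqref{expoente1}--\eqref{expoente2} reduce to the global bounds already contained in \eqref{theorem_estimate} (together with the addendum valid when $\inf_{\mathbb{R}_+^3}\rho_0>0$); the whole difficulty sits on $[0,1]$, where $\sigma=t$ and the initial regularity $u_0\in H^s$ must be converted into control of the singular layer as $t\to0^+$.

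For \eqref{expoente1} I would first test the momentum equation in \eqref{equations int} with the convective derivative $\dot u$ and integrate over $\mathbb{R}_+^3$. Writing $\rho\dot u^j=-P(\rho)_{x_j}+\mu\Delta u^j+\lambda(\operatorname{div}u)_{x_j}+\rho f^j$ and integrating by parts in $x$, the viscous terms produce $-\tfrac{d}{dt}\int_{\mathbb{R}_+^3}\big(\tfrac{\mu}{2}|\nabla u|^2+\tfrac{\lambda}{2}(\operatorname{div}u)^2\big)$ up to the pressure work and to boundary integrals over $\partial\mathbb{R}_+^3=\{x_3=0\}$. This yields a differential inequality of the schematic form
\[
\frac{d}{dt}\int_{\mathbb{R}_+^3}\Big(\tfrac{\mu}{2}|\nabla u|^2+\tfrac{\lambda}{2}(\operatorname{div}u)^2-P\,\operatorname{div}u\Big)dx+\int_{\mathbb{R}_+^3}\rho|\dot u|^2\,dx\le\mathcal B(t)+\mathcal N(t),
\]
where $\mathcal B$ collects the boundary contributions and $\mathcal N$ the nonlinear and force errors. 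The new point compared with \cite{Santos}, which treats only the Cauchy problem, is the control of $\mathcal B$: the Navier condition \eqref{boundary int} expresses the trace of $u$ through $K$ and the tangential data, so each $\mathcal B$ can be rewritten using \eqref{k-condition} and bounded by interior quantities via a trace inequality together with the imbedding and interpolation inequalities \eqref{limitacao} and \eqref{des_interpolacao} of Section~\ref{preliminaries}. The errors $\mathcal N$ (products such as $\nabla u\,|\dot u|$ and convective terms) are absorbed using the smallness of $C_0+C_f$ and the $L^p$ bounds on $\nabla u$ and $D^2u_{F,\omega}$ supplied by Proposition~\ref{decomposition}.

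Next I would multiply this inequality by $\sigma^{1-s}$ and integrate in time. Using $\sigma^{1-s}\frac{d}{dt}E=\frac{d}{dt}(\sigma^{1-s}E)-(1-s)\sigma^{-s}\sigma'E$ produces the extra term $(1-s)\int_0^T\sigma^{-s}\sigma'\int_{\mathbb{R}_+^3}|\nabla u|^2$, supported on $[0,1]$, together with the limiting value of $\sigma^{1-s}\int_{\mathbb{R}_+^3}|\nabla u|^2$ as $t\to0^+$. Both are exactly the quantities governed by the $H^s$ smoothing: applying the half-space Green's-function/semigroup estimates to the principal linear part one expects $\int_{\mathbb{R}_+^3}|\nabla u(t)|^2\lesssim t^{-(1-s)}\|u_0\|_{H^s}^2$ on $[0,1]$ modulo lower order terms, which makes $\lim_{t\to0^+}t^{1-s}\int_{\mathbb{R}_+^3}|\nabla u|^2$ finite and $\le C\|u_0\|_{H^s}^2$ and lets the singular term be tamed by a Gr\"onwall argument after multiplying by the integrating factor $t^{-(1-s)}$; this closes \eqref{expoente1}. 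Estimate \eqref{expoente2} is obtained one level higher: differentiating the momentum equation along particle trajectories (in the material sense $\partial_t+u\cdot\nabla$) yields an evolution equation for $\dot u$; testing it with $\dot u$ gives $\frac{d}{dt}\int_{\mathbb{R}_+^3}\rho|\dot u|^2+\int_{\mathbb{R}_+^3}|\nabla\dot u|^2\le(\text{boundary}+\text{errors})$, and weighting by $\sigma^{2-s}$ (one power higher, reflecting that $\dot u$ costs two spatial derivatives) and integrating as before delivers \eqref{expoente2}, with the initial layer now controlled by $\int_{\mathbb{R}_+^3}\rho|\dot u(t)|^2\lesssim t^{-(2-s)}\|u_0\|_{H^s}^2$.

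I expect the main obstacle to be twofold, both features being absent from the $\mathbb{R}^3$ analysis of \cite{Santos}. First, the boundary integrals $\mathcal B$: every integration by parts on $\mathbb{R}_+^3$ leaves a trace term on $\{x_3=0\}$ that must be re-expressed through \eqref{boundary int} and estimated by interior norms without destroying the smallness structure, which is where the hypotheses \eqref{k-condition} on $K$ and the half-space tools of Section~\ref{preliminaries} become essential. Second, the passage to the singular limit $t\to0^+$ with only $u_0\in H^s$: one must show that the correction term $(1-s)\sigma^{-s}\sigma'\int_{\mathbb{R}_+^3}|\nabla u|^2$, and its analogue at the $\dot u$ level, are genuinely dominated by $\|u_0\|_{H^s}$ rather than merely finite, forcing a quantitative use of the linear smoothing estimate together with a continuity/bootstrap argument anchored in the global smallness $C_0+C_f\le\varepsilon$.
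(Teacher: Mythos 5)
There is a genuine gap, and it sits exactly at the fractional values $s\in(0,1)$. Your argument controls the singular term $(1-s)\int_0^1 t^{-s}\|\nabla u\|_2^2\,dt$ by invoking a parabolic smoothing bound $\|\nabla u(t)\|_2^2\lesssim t^{-(1-s)}\|u_0\|_{H^s}^2$ that you attribute to ``the principal linear part''. But this bound is precisely the sup-half of \eqref{expoente1}, the statement to be proven: the system has variable coefficient $\rho$ and a nonlinear convective term, so there is no semigroup whose $H^s\to H^1$ smoothing you may simply quote, and the data-to-solution map $u_0\mapsto u$ is nonlinear, so interpolation between the endpoint cases $s=0$ and $s=1$ cannot be applied to it directly. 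Worse, even if one grants the smoothing claim, your Gr\"onwall step fails at the borderline: with $\|\nabla u(t)\|_2^2\sim t^{-(1-s)}$ the correction term behaves like $\int_0^1 t^{-s}\,t^{-(1-s)}\,dt=\int_0^1 t^{-1}\,dt=\infty$. The endpoint $s=0$ survives only because there $t^{-s}\equiv1$ and $\int_0^1\|\nabla u\|_2^2\,dt$ is finite by the basic energy bound \eqref{theorem_estimate} --- i.e., by a space--time integral, not by pointwise decay --- and this is exactly why a pointwise integrating-factor argument cannot produce the fractional weight.

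The paper's route supplies the missing linear structure: write $u=v+w$, where $v$ solves the \emph{linear} homogeneous problem \eqref{homogeneo} (operator $\mathcal L$ with the coefficients $\rho,u$ of the fixed Hoff solution, Navier boundary condition, data $u_0$) and $w$ solves \eqref{nao-homogeneo} with zero data. For $w$ no interpolation is needed: an energy argument in the style of \cite[Lemma 3.3]{Xiangdi}, with the boundary term reduced by the Navier condition to $-\frac{\mu}{2}\frac{d}{dt}\int_{\partial\mathbb{R}_+^3}K^{-1}|w|^2\,dS_x$, gives the unweighted bound \eqref{interp_para_w}. For $v$ one proves the two endpoint estimates (data in $H^1$, no weight; data in $L^2$, weight $\sigma$, where the dangerous term is $\int_0^1\|\nabla v\|_2^2\,dt$, finite by the $L^2$ energy bound) and then, because $u_0\mapsto\nabla v(t)$ and $u_0\mapsto\dot v$ \emph{are linear}, interpolates using $H^s=(L^2,H^1)_{s,2}$ and the Stein--Weiss theorem for the $\sigma$-weighted space--time norm; this is where the fractional power $\sigma^{1-s}$ is actually manufactured. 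Your plan for \eqref{expoente2}, by contrast, is essentially the paper's (apply $\sigma^m\dot u^j(\partial_t(\cdot)+\operatorname{div}(\cdot\,u))$ with $m=2-s$, handle traces via \eqref{boundary int} and the identity $\int_{\partial\mathbb{R}_+^3}h\,dS_x=\int_{\{0\le x_3\le1\}}[h+(x_3-1)h_{x_3}]\,dx$, which also yields the good sign $-\mu\sigma^m\int_{\partial\mathbb{R}_+^3}K^{-1}|\dot u|^2\,dS_x$), except that the initial layer must be closed by feeding \eqref{expoente1} into the term $\frac{m}{2}\sigma'\sigma^{1-s}\int\rho|\dot u|^2$ and into the quartic terms $\int_0^T\sigma^{2-s}\int(|\nabla u|^4+|u|^4)$ via \eqref{des_interpolacao} and the effective-flux estimates --- not by the again-circular claim $\int\rho|\dot u(t)|^2\lesssim t^{-(2-s)}\|u_0\|_{H^s}^2$. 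Repair the $v,w$ splitting and interpolation step and your outline for the second estimate goes through.
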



These estimates, as in \cite{Santos}, imply a lagrangian structure for the Hoff solution. 
 More precisely, 
\ the following theorem, which is similar to Theorem 2.5 of \cite{Santos}, holds true for the Navier-Stokes equations \eqref{equations int} in the half-plahe $\mathbb{R}^3_{+}$ with the Navier boundary condition \eqref{boundary int}:

\begin{theorem} ({\bf cf. \cite[Theorem 2.5]{Santos}.})
\label{main}
Under the hypothesis in Theorem \ref{main 1}, if $s>1/2$ then the following assertions are true.
\begin{enumerate}
 \item[(a)] For each $x\in\overline{\mathbb{R}_+^ 3}$, there exists a unique map
$X(\cdot\,,x)\in C([0,\infty))\cap C^1((0,\infty))$ such that
\begin{equation}
X(t,x)=x+\int_0^t u(X(\tau,x),\tau)d\tau, \quad t\in [0,\infty).
\label{lagrangean_structure}
\end{equation}
\item[(b)] For each $t>0$, the map $x\mapsto X(t,x)$ is a homeomorphism of $\overline{\mathbb{R}}_+^3$ into $\overline{\mathbb{R}}_+^3$, leaving
$\partial\mathbb{R}_+^3$ invariant i.e. $X(t,\partial\mathbb{R}_+^3)\subset\partial\mathbb{R}_+^3$.
\item[(c)] Given $t_1, t_2\geq 0$, the map $X(t_1,x)\mapsto X(t_2,x)$, $x\in \mathbb{R}_+^3$, is Hölder
continuous, locally uniform with respect to $t_1,t_2$. More precisely, given any $T>0$, there exist positive numbers
$C$, $L$ and $\gamma$ such that
$$
|X(t_2,y)-X(t_2,x)|\leq C|X(t_1,y)-X(t_1,x)|^{e^{-LT^\gamma}}
$$
for all $t_1,t_2\in[0,T])$ and $x,y\in \mathbb{R}_+^3$.
\item[(d)] Let $\mathcal{M}$ be a parametrized manifold in $\mathbb{R}_+^3$ of class $C^\alpha$, for some $\alpha\in[0,1)$, and of dimension $k$, where $k=1$ or $2$. Then, for each $t>0$, $\mathcal{M}^t:= X(t,\mathcal{M})$ is also a
parametrized manifold of dimension $k$ in $\mathbb{R}_+^3$, and of class $C^\beta$, where $\beta=\alpha e^{_Lt^\gamma}$, being $L$ and $\gamma$ the same constants in item (c).
\end{enumerate}
\end{theorem}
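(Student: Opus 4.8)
The plan is to reduce all four assertions to a single analytic input: a \emph{log-Lipschitz} bound for the velocity field of the form
\begin{equation*}
|u(x,t)-u(y,t)|\le m(t)\,\varphi(|x-y|),\qquad \varphi(r)=r\bigl(1+\log^+\tfrac1r\bigr),
\end{equation*}
valid for $x,y\in\overline{\mathbb{R}_+^3}$ and $t>0$, with a coefficient $m$ satisfying $\int_0^T m(t)\,dt\le CT^\gamma$ for every $T>0$. Once this is in hand, assertions (a)--(d) follow from the classical Osgood theory of ordinary differential equations with a non-Lipschitz but Osgood-admissible modulus (note $\int_0^1 dr/\varphi(r)=\infty$), exactly as in \cite[Theorem 2.5]{Santos}. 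The whole difficulty is therefore concentrated in producing this modulus with a time weight that is integrable near $t=0$, and it is precisely here that the hypothesis $s>1/2$ enters.

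To obtain the log-Lipschitz estimate I would use the decomposition $u=u_P+u_{F,\omega}$ of Proposition \ref{decomposition}. For the pressure part, the bound \eqref{nabla uP} together with $P-\tilde P\in L^\infty\cap L^2$ (the density is bounded and the energy is finite) gives $\|\nabla u_P\|_p\le C$ uniformly in $p\in[2,\infty)$; a gradient lying in every $L^p$ with a $p$-uniform bound is of bounded mean oscillation, so $u_P(\cdot,t)$ is log-Lipschitz with a coefficient bounded in time. For the part $u_{F,\omega}$ I would use \eqref{segunda_derivada_uFw} with some $p>3$ and the Morrey embedding $W^{2,p}(\mathbb{R}_+^3)\hookrightarrow C^{1,1-3/p}$ afforded by the strong extension property, together with \eqref{primeira_derivada_uFw}; this bounds $\|\nabla u_{F,\omega}(\cdot,t)\|_\infty$ by $\|\nabla F\|_p+\|\nabla\omega\|_p$ plus lower order terms already controlled. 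Since $F$ and $\omega$ solve elliptic problems whose right-hand sides are controlled by $\rho\dot u$, the interpolation inequalities of Section \ref{preliminaries} reduce these to the quantities $\|\sqrt\rho\,\dot u\|_2$ and $\|\nabla\dot u\|_2$ weighted by $\sigma^{2-s}$ in \eqref{expoente2}. Integrating in time and applying Cauchy--Schwarz against the weight yields $\int_0^T\|\nabla u_{F,\omega}(\cdot,t)\|_\infty\,dt\le CT^\gamma$, the convergence of the weight integral near $t=0$ being exactly the condition $s>1/2$. This careful bookkeeping of the $t\to 0^+$ singularity is the \textbf{main obstacle}.

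With the log-Lipschitz modulus established, assertions (a) and (b) are standard. Existence of $X(\cdot,x)$ solving \eqref{lagrangean_structure} follows from the continuity of $u$ on $\overline{\mathbb{R}_+^3}\times(0,\infty)$ (stated in the Introduction) via the Cauchy--Peano theorem, and uniqueness from the Osgood criterion applied to $\varphi$; the regularity $X(\cdot,x)\in C([0,\infty))\cap C^1((0,\infty))$ is then read off from the integral equation itself. The Navier condition \eqref{boundary int} forces $u^3=0$ on $\partial\mathbb{R}_+^3$, so $u$ is tangent to the boundary; hence a path started on $\partial\mathbb{R}_+^3$ has $\dot X^3=u^3=0$ and, by uniqueness, stays on $\partial\mathbb{R}_+^3$, which gives the invariance $X(t,\partial\mathbb{R}_+^3)\subset\partial\mathbb{R}_+^3$. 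Injectivity of $x\mapsto X(t,x)$ and continuity of its inverse come from running the Osgood-unique flow backward in time, and continuity of $x\mapsto X(t,x)$ itself from the stability estimate of item (c); together these yield the homeomorphism in (b).

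Finally, (c) is obtained by applying the log-Lipschitz bound to the separation $d(t)=|X(t,y)-X(t,x)|$, which satisfies $\tfrac{d}{dt}d(t)\le m(t)\,\varphi(d(t))$; integrating this Osgood differential inequality between $t_1$ and $t_2$ and using $\int_{t_1}^{t_2}m\le LT^\gamma$ produces the double-exponential contraction of the H\"older exponent, $d(t_2)\le C\,d(t_1)^{e^{-LT^\gamma}}$. Assertion (d) then follows by composition: parametrizing $\mathcal{M}$ by a map of class $C^\alpha$ and post-composing with $X(t,\cdot)$, the H\"older exponents multiply, so $\mathcal{M}^t=X(t,\mathcal{M})$ is a parametrized manifold of the same dimension $k$ and of class $C^\beta$ with $\beta=\alpha e^{-Lt^\gamma}$, the constants $L,\gamma$ being those of item (c).
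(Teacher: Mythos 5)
Your overall strategy --- reduce (a)--(d) to a time-integrable log-Lipschitz modulus for $u$ via the decomposition $u=u_P+u_{F,\omega}$ of Proposition \ref{decomposition} together with the weighted estimates of Theorem \ref{main 1}, with $s>1/2$ making the $\sigma$-weights integrable at $t=0$, and then run Osgood theory --- is exactly the paper's route, which delegates the bound $\int_0^T\langle u(\cdot,t)\rangle_{LL}\,dt\leq CT^\gamma$ to \cite[lemmas 3.1 and 3.2]{Santos} and treats (c), (d) by the same Osgood inequality and exponent multiplication you describe. However, one step of your argument fails as written: you assert that \eqref{nabla uP} holds with a constant \emph{uniform} in $p\in[2,\infty)$ and deduce $\nabla u_P\in \mathrm{BMO}$. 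The constant in \eqref{nabla uP} is a Calder\'on--Zygmund constant and grows (linearly) as $p\to\infty$, so no such uniform bound is available; worse, a genuinely $p$-uniform bound $\|\nabla u_P\|_p\leq C$ would force $\|\nabla u_P\|_\infty\leq C$ (if $|\nabla u_P|>C+\varepsilon$ on a set of measure $\delta>0$, then $\|\nabla u_P\|_p\geq (C+\varepsilon)\delta^{1/p}>C$ for large $p$), i.e. $u_P$ would be Lipschitz --- precisely the conclusion that is false in general and the reason the log-Lipschitz class is needed at all. The repair is the paper's own mechanism: $u_P$ is given directly by the representations \eqref{formula_u_P_j12}--\eqref{formula_u_P_j3} as an operator of the form $\nabla G * (P-\tilde P)$ with $P-\tilde P\in L^2\cap L^\infty$ uniformly in $t$, so the kernel estimate \eqref{conv fund} yields $\|u_P(\cdot,t)\|_{LL}\leq C$ outright; alternatively the Yudovich-type route $\|\nabla u_P\|_p\leq Cp\,\|P-\tilde P\|_p$, or $L^\infty\to\mathrm{BMO}$ boundedness of the singular integral plus the Zygmund-class embedding, both work --- but not uniform $L^p$ bounds.

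A second, smaller defect: your boundary-invariance argument ``a path started on $\partial\mathbb{R}_+^3$ has $\dot X^3=u^3=0$'' is circular as stated, since $u^3(X(t),t)=0$ only while $X(t)$ lies on the boundary, which is what is to be proved. The paper's fix is to construct an auxiliary boundary-confined solution first: solve the tangential two-dimensional system on $\partial\mathbb{R}_+^3$, set $Y(t,x)=(X^1(t,x),X^2(t,x),0)$, check that $Y$ solves the full equation \eqref{lagrangean_structure} because $u^3=0$ on $\partial\mathbb{R}_+^3$ by \eqref{boundary int}, and only then invoke the uniqueness of item (a) to conclude $Y=X$. You gesture at uniqueness, but without the auxiliary solution there is nothing for uniqueness to compare against. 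The rest of your outline --- backward flow for injectivity/surjectivity in (b), the Osgood differential inequality for $d(t)=|X(t,y)-X(t,x)|$ in (c), and composition of H\"older maps in (d) --- coincides with the paper's proof.
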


We shall assume throughout the paper, without loss of generality, that the Hoff solution $(\rho, u)$
to \eqref{equations int}-\eqref{initial} is smooth, since it is the limit of smooth solutions
(see \cite[Proposition 3.2 and \S 4]{HalfSpace}) and all the above estimates can be obtained by passing to the limit from corresponding uniform estimates for smooth estimates (a priori estimates). Notice that by the proof of \cite[Proposition 3.2]{HalfSpace} we have
$\rho(\cdot,t),u(\cdot,t)\in H^\infty(\mathbb{R}^3_+)$ for any $t\ge0$, if all data are {\lq\lq}smooth{\rq\rq}.

\medskip

Considering the Cauchy problem, \ D. Hoff \cite{Hoff2002} established the lagrangian structure in
dimension two with the initial velocity in the Sobolev space $H^s$, for an arbitrary $s>0$, while
D. Hoff and M. Santos \cite{Santos} proved that the velocity field was a lipschitzian vector field,
in dimension two and three, for the initial velocity in $H^s$, with $s>0$ in dimension two and
$s>1/2$ in dimension three, and, as a consequence, assured the lagrangian structure in dimensions
two and three;  \ T. Zhang and D. Fang \cite{Zhang-Fang} obtained the lagrangian structure in dimension two for the viscosity $\lambda=\lambda(\rho)$, depending on the fluid density $\rho$, but with the initial velocity in $H^1(\mathbb{R}^2)$, and \ P.M. Pardo \cite{Pardo} extended the lagrangian structure result obtained in  \cite{Santos} to non isentropic fluids in dimension three, but under the hypothesis that the convective derivative of the specific internal energy is square integrable.

With regards to initial and boundary value problems, D. Hoff and M. Perepelitsa \cite{Hoff-Perepelitsa}
showed, in particular, the lagrangian structure in the half-plane with the initial velocity in $H^1$.

\smallskip

This paper contains two more sections. In Section \ref{preliminaries} we display some preliminaries results we use in the proofs of Proposition \ref{decomposition} and theorems \ref{main 1} and \ref{main}. In Section \ref{proofs} we prove these three results.

\section{Preliminaries}
\label{preliminaries}

One of the main properties of a half-space $\mathbb{R}_+^n$ is the existence of a
{\em strong m-extension operator} $\mathcal{E}$, for any $m\in\mathbb{Z}_+$,
and its explicit construction; see \cite[Theorem 5.19 and its proof]{Adams}.
This property implies that several classical inequalities on $\mathbb{R}^n$ holds also
on $\mathbb{R}^n_+$. In particular, it is very useful the inequality
\begin{equation}
\|u\|_{L^\infty(\Omega)}\leq C(\|u\|_{L^2(\Omega)}+\|\nabla u\|_{L^p(\Omega)})
\label{limitacao}
\end{equation}
which we shall use with $\Omega=\mathbb{R}^3_+$, but it is valid for any domain (open set)
$\Omega$ in $\mathbb{R}^n$ that has a {\em strong 1-extension operator} $\mathcal{E}$ that maps $C^1(\Omega)$ into $C^1(\mathbb{R}^n)$ and a {\em simple (0,p)-extension operator} $\mathcal{E}_0$ such that
$\nabla\circ\mathcal{E}=\mathcal{E}_0\circ\nabla$ on $C^1(\Omega)$,
$p>n$, and for all $u\in C^1(\Omega)$ such that $u\in L^2(\Omega)$ and
$\nabla u\in L^p(\Omega)$, being $C$ a constant depending on $n$ and $p$.
Indeed, by the proof of Morrey's inequality \cite[p. 282]{Evans} and the above extension properties, it easy to see that for such
functions $u$'s we have
$\|u\|_{L^\infty(\Omega)} \le \|\mathcal{E}(u)\|_{L^\infty(\mathbb{R}^n)} \le C(\|\mathcal{E}(u)\|_{L^2(\mathbb{R}^n)}+\|\nabla \mathcal{E}(u)\|_{L^p(\mathbb{R}^n)})\le C(\|u\|_{L^2(\Omega)}+\|\nabla u\|_{L^p(\Omega)})$, with possibly different constant $C$'s \ ($C=C(n,p)$).

Actually, many results in this paper certainly hold true for domains in $\mathbb{R}^n$ with the above extension properties and a nice boundary (particularly to which we can assure the existence of the Green function; see below) but we restrict ourselves here to the half-space $\mathbb{R}_+^3$ (although some facts in this Section we discuss on $\mathbb{R}_+^n$,
for a general $n$, since it makes no relevant difference to particularize them to the case $n=3$).

Another very useful inequality is the interpolation inequality
\begin{equation}
\|u\|_{L^p(\mathbb{R}^3_+)}\leq C \|u\|_{L^2(\mathbb{R}^3_+)}^{(6-p)/2p}\|\nabla u\|_{L^2(\mathbb{R}^3_+)}^{(3p-6)/2p},
\label{des_interpolacao}
\end{equation}
which holds for any function $u$ in the Sobolev space $H^1(\mathbb{R}^3_+)$, with $p\in [2,6]$ and $C$ being a constant depending on $p$. Notice that this inequality can be obtained from the same inequality in $\mathbb{R}^3$, similarly as we showed above the inequality \eqref{limitacao}.

In $\mathbb{R}^n_+$ we also have the explicit formula for the Green function, with homogeneous Dirichlet and Neumann boundary conditions, given respectively by
(see e.g. \cite[p. 121]{Gilbarg}),
\begin{equation}
\label{green}
G_D(x,y):=\Gamma(x-y)-\Gamma(x-y^*) \mbox{ and } G_N(x,y):=\Gamma(x-y)+\Gamma(x-y^*),
\end{equation}
for $x,y\in\overline{\mathbb{R}_+^n},\, x\not=y$, where $\Gamma$ is the fundamental solution of the laplacian operator in $\mathbb{R}^n$ 
 and $y^*=(y^*_1,\cdots,y^*_n)$ is the reflection point of $y=(y_1,\cdots,y_n)\in\overline{\mathbb{R}_+^n}$ through the boundary $\partial\mathbb{R}^n_+$, i.e. $y^*_j=y_j$ for
$j=1,\cdots, n-1$ and  $y_n^*=-y_n$. A basic fact we shall use is that the operator
$$
g\mapsto\int_{\mathbb{R}_+^n}\nabla_x G(x,y)g(y)dy,
$$
which we shall denote by $\nabla G*g$, where $G=G_D$ or $G_N$, maps the space $L^p(\mathbb{R}_+^n)\cap L^\infty(\mathbb{R}_+^n)$, for
$1\leq p <n$, continuously into the space of bounded log-lispchitzian functions in $\mathbb{R}_+^n$, i.e. the space of continuous functions $g$ in $\mathbb{R}_+^n$ such that
\begin{equation}
\label{LL}
\|g\|_{LL}\equiv \|g\|_{LL(\mathbb{R}_+^n)}:=\sup_{x\in\mathbb{R}_+^n}|g(x)| + \langle g\rangle_{LL}<\infty,
\end{equation}
where $\langle g\rangle_{LL}:=\sup_{x,y\in\mathbb{R}_+^n;\,0<|x-y|\le 1}\frac{|g(x)-g(y)|}{|x-y|(1-\log|x-y|)}$.
More precisely, if $g\in L^p(\mathbb{R}_+^n)\cap L^\infty(\mathbb{R}_+^n)$, and $1\leq p <n$, then
\begin{equation}
\label{conv fund}
\|\nabla G* g\|_{LL(\mathbb{R}_+^n)}\leq C(\|g\|_{L^{p}(\mathbb{R}_+^n)}+\|g\|_{L^{\infty}(\mathbb{R}_+^n)})
\end{equation}
where $C$ is a constant depending on $n$ and $p$. This follows from the similar result
for $\nabla\Gamma* g$ (see e.g. \cite[Lema 1.3.9]{Pardo}) and the extension (simple $0$-extension) property of $\mathbb{R}_+^n$.
Indeed, denoting by  $\tilde g$ the extension of $g$ to $\mathbb{R}^n$ by reflection through $\partial\mathbb{R}_+^n$
(i.e. $\tilde g(y):= g(y^*)$ when $y_n<0$), in the case
$G(x,y)=G_N(x,y)=\Gamma(x-y)+\Gamma(x-y^*)$ 
 we have
$\nabla G*g=\nabla\Gamma*\tilde g$, where the last $*$ stands for the classical convolution product
in $\mathbb{R}^n$. Then
$\|\nabla G*g\|_{LL(\mathbb{R}_+^n)}=\|\nabla\Gamma*\tilde g\|_{LL(\mathbb{R}^n)}\leq  C(\|\tilde g\|_{L^p(\mathbb{R}^n)}+\|\tilde g\|_{L^\infty(\mathbb{R}^n)})
 \leq  2C (\|g\|_{L^p(\mathbb{R}_+^n)}+\|g\|_{L^\infty(\mathbb{R}_+^n)})$. 
 Regarding $G(x,y)=G_D(x,y)=\Gamma(x-y)-\Gamma(x-y^*)$, it is easy to see that $\nabla G*g = \nabla\Gamma *\tilde g-2\int_{\mathbb{R}_+^n}\nabla \Gamma(x-y^*) g(y)dy$, so we obtain
\eqref{conv fund} similarly, since the last integral has a regular kernel.

Certainly the estimate \eqref{conv fund} is valid for more general domains than
$\mathbb{R}_+^n$ (e.g. the 3d domain in \cite{3ddomain}), since we can obtain it using only the Green function properties.

To estimate solutions of \eqref{equations int}-\eqref{initial} using norms in $H^s$, $0<s<1$,
we shall need to use some interpolations theory, since the space $H^s$ is the interpolation space
$(L^2,H^1)_{s,2}$ (see e.g. \cite{Triebel}). The interpolation Stein-Weiss' theorem
\cite[p. 115]{Bergh} will be also very important to us.

\smallskip

One of the ideas in the analysis of D. Hoff in e.g. \cite{Hoff2002} is to decompose the velocity field $u$ as the sum of two terms, $u_{F,\omega}$ and $u_P$, being the term $u_{F,\omega}$ related to the distinguished quantity
$F=(\lambda+\mu) \operatorname*{div} u-P(\rho)+P(\tilde\rho)$ and to the vorticity matrix
$\omega^{j,k}=u_{x_k}^{j}-u_{x_j}^{k}$, and $u_P$ related to the
fluid pressure $P$. In subsection \ref{proof of decomposition} we exhibit a similar decomposition.
In \cite{Santos}, the vector field $u_P$ is log-lipschitzian with respect to
the spatial variable, with the log-lipschitz norm $\|u_P(\cdot,t)\|_{LL}$ (see \eqref{LL})
locally integrable with respect to $t$, while $u_{F,\omega}$ is a lipschitzian vector field with respect to the spatial variable, with the Lipschitz norm
\begin{equation}
\|u_{F,\omega}\|_{\mbox{\tiny Lip}}\equiv \sup_{x\in\mathbb{R}^3_+}|u_{F,\omega}(x,t)|+\sup_{x,y\in\mathbb{R}^3_+;x\not=y}|u_{F,\omega}(x,t)-u_{F,\omega}(y,t)|/|x-y|
\end{equation}
also locally integrable (the more difficult part to show) with respect to $t$.
Here, this facts are also true, and we have extra difficulties to show them, due to the presence of the boundary.
For instance, to estimate the $L^p$ norm of $D^2u_{F,\omega}$ we need to consider a {\em singular kernel} on $\partial\mathbb{R}^3_+$, which we deal with the help of the following theorem due to Agmon, Douglis and Nirenberg \cite[Theorem 3.3]{Niremberg} (\cite[Theorem II.11.6]{Galdi}).
\begin{theorem}
\label{Singular_fronteira}
Let $p\in (1,\infty)$ and
$\kappa:\overline{\mathbb{R}_+^n}\equiv\mathbb{R}^{n-1}\times [0,\infty)-\{(\mathbf{0},0)\}\to\mathbb{R}$
given by $\kappa(x,x_n)=w(\frac{(x,x_n)}{|(x,x_n)|})/|(x,x_n)|^{n-1}$, where
 $w$ is a continuous function on $\overline{\mathbb{R}_+^n}\cap\mathbb{S}^{n-1}$, H\"older continuous on $\mathbb{S}^{n-1}\cap\{x_n=0\}$ and satisfying $\int_{\mathbb{S}^{n-1}}w(x,0)dx=0$. Assume that $\kappa$ has continuous partial derivatives
$\partial_{x_i}\kappa, i=1,2,...,n$, $\partial_{x_n}^2\kappa$ in $\mathbb{R}_+^n$ which are bounded by a constant $c$ on $\mathbb{R}_+^n\cap \mathbb{S}^{n-1}$.
Then, for any function $\phi\in L^p(\partial\mathbb{R}_+^n)$ that has finite seminorm
$\langle \phi\rangle_{1-1/p,p}\equiv (\int_{\partial\mathbb{R}_+^n}\int_{\partial\mathbb{R}_+^n}\dfrac{|\phi(x)-\phi(y)|^p}{|x-y|^{n-2+p}}dxdy)^{1/p}$,
the function $\psi(x,x_n):=\int_{\partial\mathbb{R}_+^n}\kappa(x-y,x_n)\phi(y)dy$
belongs to $L^p(\mathbb{R}_+^n)$ and $\|\nabla\psi\|_{L^p(\mathbb{R}_+^n)}\leq Cc\langle\phi\rangle_{1-1/p}$, where $C$ is a constant depending on $n$ and $p$.
\end{theorem}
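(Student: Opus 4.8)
The plan is to bound each first-order partial derivative of $\psi$ separately and to reduce the estimate to the $L^p(\mathbb{R}^n_+)$-boundedness of a single positive integral operator which the Gagliardo seminorm $\langle\phi\rangle_{1-1/p,p}$ controls. Throughout write a point of $\overline{\mathbb{R}^n_+}$ as $(x,x_n)$ with $x\in\mathbb{R}^{n-1}$ and $x_n\ge0$, and set $z=x-y$ for the tangential difference. Since $\kappa$ is homogeneous of degree $-(n-1)$ and smooth off the origin, for each fixed height $x_n>0$ the kernel $z\mapsto\kappa(z,x_n)$ is smooth and $z\mapsto\nabla\kappa(z,x_n)$ is homogeneous of degree $-n$; the only real difficulty is to obtain control that is uniform as $x_n\to0^+$ and summable in $x_n$ over $(0,\infty)$.

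First I would extract the cancellation that tames the singularity. For a tangential index $k<n$ one has $\int_{\mathbb{R}^{n-1}}\partial_{x_k}\kappa(z,x_n)\,dz=0$, being the integral of a tangential derivative of an integrable function over $\mathbb{R}^{n-1}$; for the height variable, the substitution $z=x_n\zeta$ shows that $\int_{\mathbb{R}^{n-1}}\kappa(z,x_n)\,dz$ is independent of $x_n$, so $\int_{\mathbb{R}^{n-1}}\partial_{x_n}\kappa(z,x_n)\,dz=0$ as well. This lets me insert the difference $\phi(y)-\phi(x)$ and write, for every $k\in\{1,\dots,n\}$,
\[
\partial_{x_k}\psi(x,x_n)=\int_{\mathbb{R}^{n-1}}\partial_{x_k}\kappa(x-y,x_n)\,[\phi(y)-\phi(x)]\,dy .
\]
Using the bound $c$ on the first derivatives of $\kappa$ on the unit sphere together with the homogeneity of $\nabla\kappa$, so that $|\nabla\kappa(z,x_n)|\le c\,(|z|^2+x_n^2)^{-n/2}$, this yields the pointwise domination
\[
|\nabla\psi(x,x_n)|\le Cc\int_{\mathbb{R}^{n-1}}\frac{|\phi(x)-\phi(y)|}{(|x-y|^2+x_n^2)^{n/2}}\,dy .
\]
The insertion of $\phi(y)-\phi(x)$ is essential: without it the integral of the degree-$(-n)$ kernel only decays like $x_n^{-1}$, which is not $p$-integrable in $x_n$ near the boundary.

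The heart of the argument is then to show that the positive operator on the right has $L^p(\mathbb{R}^n_+)$ norm controlled by $Cc\,\langle\phi\rangle_{1-1/p,p}$. Passing to polar coordinates $y=x+r\theta$, $\theta\in\mathbb{S}^{n-2}$, and setting $B(x,r)=\int_{\mathbb{S}^{n-2}}|\phi(x)-\phi(x+r\theta)|\,d\theta$, the right-hand side becomes $Cc\int_0^\infty r^{n-2}B(x,r)\,(r^2+x_n^2)^{-n/2}\,dr$. Writing $r^{n-2}B=r^{n-1}(B/r)$, the kernel $r^{n-1}(r^2+x_n^2)^{-n/2}$ is homogeneous of degree $-1$ in $(r,x_n)$, so a Schur test against the scaling weight $r^{-1/p}$ (equivalently a Hardy inequality) gives $\big(\int_0^\infty|\nabla\psi(x,x_n)|^p\,dx_n\big)^{1/p}\le Cc\,\big(\int_0^\infty r^{-p}B(x,r)^p\,dr\big)^{1/p}$. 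Integrating in $x$, bounding $B(x,r)^p\le C\int_{\mathbb{S}^{n-2}}|\phi(x)-\phi(x+r\theta)|^p\,d\theta$ by Jensen's inequality, and undoing the polar change of variables recovers exactly $\langle\phi\rangle_{1-1/p,p}^p$, the exponent $n-2+p$ in the seminorm matching the homogeneity bookkeeping. This final reduction — controlling a positive, scale-invariant kernel operator in $L^p$ uniformly by the fractional Sobolev--Slobodeckij seminorm — is the main technical obstacle, and it is precisely where the degree of homogeneity of $\kappa$, the constant $c$, and the order $1-1/p$ must be matched; it is the content of the Agmon--Douglis--Nirenberg singular-integral machinery that we quote.

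Finally, the membership $\psi\in L^p(\mathbb{R}^n_+)$ is where the mean-zero hypothesis on $w$ is actually needed, the gradient estimate above having used only the homogeneity-driven cancellations: it forces $\int_{\mathbb{R}^{n-1}}\kappa(z,x_n)\,dz=0$, which removes the term $\phi(x)\int\kappa\,dz$ that would otherwise be constant in $x_n$ and hence non-integrable over the half-space. With that constant gone, $\psi$ is again dominated by a scale-invariant kernel and handled within the same framework, now using $\phi\in L^p(\partial\mathbb{R}^n_+)$ directly. Assembling the bounds for the $n$ components of $\nabla\psi$ then yields $\|\nabla\psi\|_{L^p(\mathbb{R}^n_+)}\le Cc\,\langle\phi\rangle_{1-1/p,p}$ with $C=C(n,p)$, as asserted. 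One may also read the statement conceptually through the trace identity $W^{1-1/p,p}(\mathbb{R}^{n-1})=\mathrm{Tr}\,W^{1,p}(\mathbb{R}^n_+)$, which identifies the right-hand side with $\inf\{\|\nabla\Phi\|_{L^p(\mathbb{R}^n_+)}:\Phi|_{\partial\mathbb{R}^n_+}=\phi\}$ and explains why a boundary datum of fractional smoothness $1-1/p$ produces an $L^p$ gradient on the half-space.
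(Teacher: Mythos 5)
You should know at the outset that the paper does not prove this theorem at all: it is imported verbatim from Agmon--Douglis--Nirenberg \cite[Theorem 3.3]{Niremberg} (see also \cite[Theorem II.11.6]{Galdi}), so the benchmark is the classical proof in those references --- which your sketch essentially reconstructs: subtract $\phi(x)$ using vanishing of $\int_{\mathbb{R}^{n-1}}\partial_{x_k}\kappa(z,x_n)\,dz$, dominate $|\nabla\psi|$ pointwise by the positive kernel $c(|x-y|^2+x_n^2)^{-n/2}$ acting on $|\phi(x)-\phi(y)|$, and close with a Hardy/Schur test in the scaling variable plus Jensen on the sphere. Your bookkeeping in that last step is correct: the kernel $r^{n-1}(r^2+x_n^2)^{-n/2}$ is jointly homogeneous of degree $-1$, and $\int_0^\infty r^{n-1-1/p}(r^2+1)^{-n/2}\,dr$ converges at both ends (integrand $\sim r^{n-1-1/p}$ at $0$ and $\sim r^{-1-1/p}$ at infinity), so the one-dimensional bound and the recovery of $\langle\phi\rangle_{1-1/p,p}$ from the exponent $n-2+p$ go through exactly as you say.

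However, two of your structural claims are genuinely wrong. (i) You assert that the gradient estimate uses ``only homogeneity-driven cancellations'' and that the mean-zero hypothesis on $w$ is needed only for $\psi\in L^p$. That is backwards for the normal derivative: scaling gives $\int_{\mathbb{R}^{n-1}}\partial_{x_n}\kappa(z,x_n)\,dz=A/x_n$ with $A=\int_{\mathbb{R}^{n-1}}\partial_{x_n}\kappa(\zeta,1)\,d\zeta$, and $A\neq 0$ in general: for $\kappa(z,x_n)=(|z|^2+x_n^2)^{-(n-1)/2}$, i.e.\ $w\equiv 1$ violating mean zero, one computes $\int_{\mathbb{R}^{n-1}}\partial_{x_n}\kappa(z,x_n)\,dz=-(n-1)\,c_n/x_n$ with $c_n>0$. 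Your argument ``$\int\kappa(z,x_n)\,dz$ is independent of $x_n$, hence has zero derivative'' presupposes that this integral is \emph{finite}, and finiteness is exactly where the mean-zero of $w(\cdot,0)$ over the equatorial sphere, combined with the assumed H\"older continuity there (which quantifies $w((z,x_n)/|(z,x_n)|)\to w(z/|z|,0)$), must enter: without cancellation the integral diverges logarithmically, since $\kappa(\cdot,x_n)\sim|z|^{-(n-1)}$ on an $(n-1)$-dimensional hyperplane. For the same reason your tangential cancellation cannot be justified as ``the integral of a derivative of an integrable function,'' since $\kappa(\cdot,x_n)$ is generally not integrable; the correct route is the divergence theorem on $\{|z|\le R\}$ with flux $O(1/R)$ (kernel of size $R^{-(n-1)}$ against surface measure $R^{n-2}$), which happens to need no mean zero. (ii) Your claim that mean zero forces $\int_{\mathbb{R}^{n-1}}\kappa(z,x_n)\,dz=0$ is false: the Poisson-type kernel $\kappa(z,x_n)=x_n(|z|^2+x_n^2)^{-n/2}$ has $w$ vanishing identically on $\{x_n=0\}$, hence satisfies every hypothesis, yet $\int_{\mathbb{R}^{n-1}}\kappa(z,x_n)\,dz$ is a positive constant independent of $x_n$. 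Consequently your final paragraph's derivation of the membership $\psi\in L^p(\mathbb{R}_+^n)$ collapses; that part is delicate (a slowly decaying $\phi\in L^p$ with finite Gagliardo seminorm can have Poisson extension failing to lie in $L^p$ of the half-space, so more than the cancellations you invoke is at stake), and it is precisely the component you should continue to import from the cited ADN/Galdi statement rather than re-derive along these lines.
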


In fact, the coordinates of the vector fields $u_{F,\omega}, u_P$ here, described in
\S \ref{proof of decomposition}, satisfy boundary value problems for Poisson equations of the form
$-\Delta v=g_{x_j}$ in the half-space $\mathbb{R}^3_+$, for some function $g$, with Neumann or Dirichlet boundary condition. In this regard, we shall use the formulas
\begin{equation}
\begin{array}{rl}
v(x)&=-\int_{\mathbb{R}_+^n}G_D(x,y)g(y)_{y_j}dy
        -\int_{\mathbb{R}^{n-1}} G_D(x,y)_{y_n}h(y)dy\\
        &= \ \ \ \int_{\mathbb{R}_+^n}G_D(x,y)_{y_j}g(y)dy
                     -\int_{\mathbb{R}^{n-1}} G_D(x,y)_{y_n}h(y)dy,
\end{array}
\label{solution dirichlet}
\end{equation}
\begin{equation}
\begin{array}{rl}
v(x)&=-\int_{\mathbb{R}_+^n}G_N(x,y)g(y)_{y_j}dy
           -\int_{\mathbb{R}^{n-1}}G_N(x,y)h(y)dy\\
      &=\   \ \ \int_{\mathbb{R}_+^n}G_N(x,y)_{y_j}g(y)dy
           -\int_{\mathbb{R}^{n-1}}G_N(x,y)h(y)dy,
\end{array}
\label{solution neumann}
\end{equation}
for the solutions of the the boundary value problems
\begin{equation}
\left\{\begin{array}{cl}
-\Delta v=g_{x_j} & \textrm{ in } \mathbb{R}_+^n\\
 v=h & \textrm{ on } \mathbb{R}^{n-1},
\end{array}\right.
\label{dirichlet}
\end{equation}
\begin{equation}
\left\{\begin{array}{ll}
-\Delta v=g_{x_j} & \textrm{ in } \mathbb{R}_+^n\\
-v_{x_n}=h & \textrm{ on } \mathbb{R}^{n-1},
\end{array}\right.
\label{neumann}
\end{equation}
respectively, for $j=1,\cdots,n$, and $g\in H^m(\mathbb{R}_+^n), h\in H^m(\mathbb{R}^{n-1})$ with a sufficiently large $m$, where $G_D$ and $G_N$ are the Green functions in $\mathbb{R}_+^n$ with the homogeneous Dirichlet and Neumann boundary conditions, respectively (see \eqref{green}) and in the case $j=n$
we can assume $g|\mathbb{R}^{n-1}=0$, without loss of generality. \ Notice that, extending $g$ to a
function $\tilde g\in H^m(\mathbb{R}^n)$ (see \cite[Theorem 5.19]{Adams}) we can write the integral
$w(x):=\int_{\mathbb{R}_+^n}G(x,y)_{y_j}g(y)dy$, where $G=G_D,G_N$, in
\eqref{solution dirichlet},\eqref{solution neumann}, as
$w(x)=\int_{\mathbb{R}^n}\Gamma(x-y)_{y_j}\tilde g(y)dy
                -\int_{\mathbb{R}_-^n}\Gamma(x-y)_{y_j}\tilde g(y)dy
                \pm \int_{\mathbb{R}_+^n}\Gamma(x-y^*)_{y_j}g(y)dy$,
being the last two integrals harmonic functions in $\mathbb{R}_+^n$, since their kernels
are regular, for $x\in\mathbb{R}_+^n$. The first integral satisfies the equation
$-\Delta w=\tilde g_{x_j}$ in $\mathbb{R}^n$ in the classical sense (cf. e.g. \cite[\S 2.2, Theorem 1]{Evans}
where the condition of the right hand side of the Poisson equation having support compact can be replaced by the condition of being in $H^m(\mathbb{R}^n)$ for a sufficiently large $m$, as can be seen by checking the proof).  Besides, we also can write
$w(x)=\int_{\mathbb{R}_+^n}\Gamma(x-y)_{y_j} g(y)dy \pm \int_{\mathbb{R}_-^n}\Gamma(x-y)_{y_j} g(y*)dy
=\int_{\mathbb{R}^n}\Gamma(x-y)_{y_j} [\bar g(y) \pm \bar{\bar g}(y)]dy$,
where $\bar g$ and $\bar{\bar g}$ denote, respectively, the extensions by zero to $\mathbb{R}^n$ of $g$ and
$g(y*)$, from which, by using that the second derivative $\Gamma_{y_iy_j}$ of the fundamental solution for the laplacian in $\mathbb{R}^n$ is a {\em singular kernel}, we can infer the estimate
\begin{equation}
\|\nabla_x \int_{\mathbb{R}_+^n}G(x-y)_{y_j} g(y)dy\|_p\leq C \|g\|_p,
\label{1nabla w}
\end{equation}
for any $p\in (1,\infty)$, where $G=G_D,G_N$ and $C$ is a constant depending on $n$ and $p$.
On the other hand, writing $w(x)=-\int_{\mathbb{R}_+^n}G(x,y)g_{y_j}(y)dy$, by the same argument, we have also the estimate
\begin{equation}
\|D^2_x\int_{\mathbb{R}_+^n}G(x,y)_{y_j}g(y)dy\|_p\leq C \|\nabla g\|_p,
\label{2nabla w}
\end{equation}
for $p,G,C$ as in \eqref{1nabla w}. \ Regarding the boundary integrals (i.e. over $\mathbb{R}^{n-1}$) in
\eqref{solution dirichlet} and\eqref{solution neumann}, we observe that the
function $x\mapsto \int_{\mathbb{R}^{n-1}} G_D(x,y)_{y_n}h(y)dy$ defines a classical
solution to \eqref{dirichlet}, with $g=0$, if only $h$ is continuous and
bounded, as it is well known, and as for $\int_{\mathbb{R}^{n-1}}G_N(x,y)h(y)dy$, it
defines a solution to \eqref{neumann}, with also $g=0$, if $h$ is continuous and
have a nice decay at infinity (e.g. $h\in H^m(\mathbb{R}^{n-1})$ for some large $m$);
see \cite{Love,Armitage}. Besides, using Theorem \ref{Singular_fronteira},
we have the estimate
\begin{equation}
\|D^2_x\int_{\mathbb{R}^{n-1}}G_N(x,y)h(y)dy\|_p \le C \langle h\rangle_{1-1/p,p}
\le C \|\nabla\tilde h\|_p,
\label{2nabla neumann}
\end{equation}
for any $p\in (1,\infty)$, where $\tilde h$ is any extension of $h$ in $H^1(\mathbb{R}_+^n)$, $C$ is a constant depending on $n$ and $p$, and for the last inequality we used \cite[Theorem II.10.2]{Galdi}.  \ It is interesting to notice
that the problem for the laplacian equation $\Delta v=0$ in $\mathbb{R}_+^n$ with the
boundary condition $Kv_{x_n}=v$ on $\partial\mathbb{R}_+^n$, which is required for the
coordinates $u_1$ and $u_2$ of the vector field $u$ in the Navier boundary
condition \eqref{boundary int}, can be reduced to the boundary value problem
\eqref{neumann} with homogeneous boundary condition (i.e. with $h=0$ in
\eqref{neumann}) through the change of variable $V=\varphi v$
(suggested to us by D. Hoff) where $\varphi$ is a suitable function coinciding
with $e^{-K^{-1}x_n}$ on $\partial\mathbb{R}_+^n$. From this observation,
using \eqref{1nabla w}, \eqref{2nabla w} and that $\|G_N\ast v\|_p\le C\|v\|_p$, it is possible to show the estimates
\begin{equation}\begin{array}{rl}\label{navier/estimates}
\|\nabla v\|_p\le C\|v\|_p, \quad \|D^2 v\|_p\le C\|\nabla v\|_p
\end{array}\end{equation}
for the solution to the problem
$\Delta v=0 \mbox{ in } \mathbb{R}_+^n, \ Kv_{x_n}=v \mbox{ on }\partial\mathbb{R}_+^n$ and any
$p\in (1,\infty)$, where $C$ is as in \eqref{segunda_derivada_uFw}.
 \ Finally, we observe that the solutions to the problems \eqref{dirichlet} and \eqref{neumann} given,
respectively, by \eqref{solution dirichlet} and \eqref{solution neumann}, are unique in the space
$L^p(\mathbb{R}_+^n)\cap L^\infty(\mathbb{R}_+^n)$, for an arbitrary $p\in [1,\infty)$. Indeed, if $v$ is a solution of
\eqref{dirichlet} in $L^p(\mathbb{R}_+^n)\cap L^\infty(\mathbb{R}_+^n)$ with $g=h=0$, extending it to $\mathbb{R}^n$ as an odd function
with respect to $x_n$, we obtain an integrable harmonic function (in the sense of the distributions) and bounded, in $\mathbb{R}^n$, then, by Liouville's theorem, $v=0$. We can conclude the same result with
respect to \eqref{neumann} by taking instead an even extension with respect to $x_n$.

\section{Proofs}
\label{proofs}

In this section we prove Propostion \ref{decomposition} and theorems \ref{main 1} and \ref{main}.

\subsection{Proof of Proposition \ref{decomposition}}
\label{proof of decomposition}

Similarly to \cite[(2.28)]{Hoff-Perepelitsa},
we define $u_P$ as the solution of the boundary value problem
\begin{equation}
\left\{\begin{array}{cl}(\lambda+\mu)\Delta u_P=\nabla(P-\tilde P), & \textrm{ in } \mathbb{R}_+^3\\
u_P^3=(u_P^2)_{x_3}=(u_P^1)_{x_3}=0, & \textrm{ on }\partial\mathbb{R}_+^3,
\end{array}\right.
\label{up}
\end{equation}
i.e.
\begin{equation}
(\lambda+\mu)u_P^j(x) = \int_{\mathbb{R}_+^3}G_N(x,y)_{y_j}(P-\tilde P)(y)dy
=\int_{\mathbb{R}_+^3}(\Gamma(x-y)+\Gamma(x-y^*))_{y_j}(P-\tilde P)(y)dy, 
\label{formula_u_P_j12}
\end{equation}
for $j=1,2$, $x\in\mathbb{R}_+^3$, and
\begin{equation}
(\lambda+\mu)u_P^3(x) =  \int_{\mathbb{R}_+^3}G_D(x,y)_{y_3}(P-\tilde P)(y)dy
=\int_{\mathbb{R}_+^3}(\Gamma(x-y)-\Gamma(x-y^*))_{y_3}(P-\tilde P)(y)dy,
\label{formula_u_P_j3}
\end{equation}
$x\in\mathbb{R}_+^3$;
see \eqref{solution dirichlet} and \eqref{solution neumann}. By \eqref{1nabla w}, we have the estimate
\begin{equation}
\|\nabla u_P^j\|_p\leq C \|P-\tilde P\|_p, \quad j=1,2,3,
\label{upj}
\end{equation}
for any $p\in (1,\infty)$, with $C$ being a constant depending on $n$ and $p$.

Next we define $u_{F,\omega}$ as $u_{F,\omega}=u-u_P$. Using \eqref{upj}, it follows that
\begin{equation}\label{nabla ufw depending on nabla u}
\|\nabla u_{F,\omega}\|_p\leq C (\|\nabla u\|_p+\|P-\tilde P\|_p)
\end{equation}
for any $p\in (1,\infty)$, with $C$ being a constant depending on $n$ and $p$.
On the other hand, by the definitions of $u_P$, the Navier boundary condition \eqref{boundary int}, and observing that the the
momemtum equation (second equation in \eqref{equations int}) can be written in terms of the
{\em effective viscous flow} $F$ and of the vortex matrix $\omega$ as
$(\lambda+\mu)\Delta u^ j =F_{x_j}+(P-\tilde P)_{x_j}+(\lambda+\mu)\sum_{k=1}^3\omega_{x_k}^{j,k}$,
we have that $u_{F,\omega}$ satisfies the boundary value problem
\begin{equation}
\label{bvp1}
\left\{\begin{array}{cl}
(\lambda+\mu)\Delta u_{F,\omega}
=\nabla F+(\lambda+\mu)\sum_{k=1}^3\omega_{x_k}^{\cdot,k},
& \textrm{ in } \mathbb{R}_+^3\\
u_{F,\omega}^3=0, \ \ (u_{F,\omega}^j)_{x_3}=K^{-1}u^j, \ j=2,3, & \textrm{ on }\partial\mathbb{R}_+^3.\end{array}\right.
\end{equation}
Then by \eqref{1nabla w}, \eqref{2nabla w} and \eqref{2nabla neumann}, we have
\begin{equation}\label{2nabla ufw depending on nabla u}
\|D^2 u_{F,\omega}\|_p\leq C(\|\nabla F\|_p+\|\nabla\omega\|_p+\|\nabla u\|_p),
\end{equation}
for $p$ and $C$ as in \eqref{segunda_derivada_uFw}. Now, the velocity field $u$ satisfies the boundary value problem
\begin{equation}
\label{bvp2}
\left\{\begin{array}{cl}
(\lambda+\mu)\Delta u
=\nabla F+(\lambda+\mu)\sum_{k=1}^3\omega_{x_k}^{\cdot,k}+\nabla (P-\tilde P),
& \textrm{ in } \mathbb{R}_+^3\\
u^3=0, \ \ u^j_{x_3}=K^{-1}u^j, \ j=2,3, & \textrm{ on }\partial\mathbb{R}_+^3.\end{array}\right.
\end{equation}
Then, by \eqref{1nabla w} and \eqref{navier/estimates}, we have
the estimate \cite[Lemma 2.3, item (b)]{HalfSpace}
\begin{equation}
\|\nabla u\|_p\le C (\|F\|_p+\|\omega\|_p+\|P-\tilde P\|_p+\|u\|_p)
\label{1nabla uFw}
\end{equation}
where $p$ and $C$ are as in \eqref{segunda_derivada_uFw}.
By \eqref{upj}, \eqref{nabla ufw depending on nabla u}, \eqref{2nabla ufw depending on nabla u} and \eqref{1nabla uFw}, we conclude the proof of Theorem \ref{decomposition}. \ \fbox

\subsection{Proof of Theorem \ref{main 1}}
\label{proof of main 1}

To prove \eqref{expoente1}, following \cite{Hoff2002} and \cite{Hoff-Perepelitsa}, we write $u=v+w$,
where $v$ is the solution of a linear homogeneous system with initial condition
$v|_{t=0}=u_0$ and $w$ is the solution of a linear nonhomogeneous
system with initial homogeneous initial condition. More precisely, taking the differential operator $\mathcal{L}\equiv ({\mathcal L}^1, {\mathcal L}^2, {\mathcal L}^3)$ given by
$\mathcal{L}^j(z)=\rho\dot z^j-\mu\Delta z^j-\lambda \operatorname*{div}z_j, \quad j=1,2,3, \quad z = (z^1,z^2,z^3)$,
where $\dot z$ is the convective derivative $z_t + u\nabla z$, we define $v$ and $w$ as the solutions of the following initial boundary value problems
\begin{equation}
\left\{\begin{array}{ll}
\mathcal{L}(v)=0, & \textrm{ in }{\mathbb{R}_+^3} \\
(v^1,v^2,v^3)=K^{-1}(v_{x_3}^1,v_{x_3}^2,0), & \textrm{ on } \partial{\mathbb{R}_+^3}\\
v(.,0)=u_0, &
\end{array}\right.
\label{homogeneo}
\end{equation}
\begin{equation}
\left\{\begin{array}{ll}
\mathcal{L}(w)=-\nabla (P-\tilde P)+\rho f, & \textrm{ in }{\mathbb{R}_+^3} \\
(w^1,w^2,w^3)=K^{-1}(w_{x_3}^1,w_{x_3}^2,0), & \textrm{ on } \partial{\mathbb{R}_+^3}\\
w(.,0)=0. &
\end{array}\right.
\label{nao-homogeneo}
\end{equation}
Then $v$ and $w$ are estimated separately. To estimate $v$ the interpolation theory
is used, since the initial data $u_0$ is in $H^s$ and $H^s$ is the interpolation space
$\left(L^2,H^1\right)_{s,2}$;  see \cite[p. 186 and 226]{Triebel}. We shall  use also the Stein-Weiss' theorem
for $L^p$ spaces with weights \cite[p. 115]{Bergh}. For $w$, the interpolation theory is not
needed, since the initial condition is null. Actually, $w$ satisfies the estimate
\eqref{expoente1} with $s=0$ (equation \eqref{interp_para_w} below).

\begin{proposition}
If $u_0\in H^s(\mathbb{R}_+^3)$, $0\le s\le 1$, then for any positive number $T$ there
is a constant $C$ independent of $(\rho,u),v,w,\rho_0,u_0$ and $f$ such that
\begin{equation}
\sup\limits_{0\le t\le T} \sigma^{1-s}(t)\int_{\mathbb{R}_+^3}|\nabla v|^2dx + \int_0^T\int_{\mathbb{R}_+^3} \sigma^{1-s}(t)\rho|\dot v|^2dxdt\leq C||u_0||_{H^s({\mathbb{R}_+^3})}^2.
\label{interp_para_v}
\end{equation}
\end{proposition}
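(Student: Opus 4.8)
The plan is to exploit the linearity of the solution map $u_0\mapsto v$ of problem \eqref{homogeneo} together with the identification $H^s(\mathbb{R}_+^3)=(L^2,H^1)_{s,2}$, so that the claim reduces to the two endpoint estimates $s=0$ and $s=1$ and one application of the Stein--Weiss interpolation theorem. Concretely, I regard the operator $S:u_0\mapsto v$ as fixed, its coefficients $\rho$ and $u$ being the given Hoff solution, and I show that $S$ carries $L^2(\mathbb{R}_+^3)$ boundedly into the weighted space built on $\sigma^{1/2}$ (the $s=0$ bound) and $H^1(\mathbb{R}_+^3)$ boundedly into the corresponding unweighted space (the $s=1$ bound). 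Interpolating the weight $\sigma^{1/2}$ at $s=0$ against the trivial weight $1$ at $s=1$ with parameter $\theta=s$ yields exactly $(\sigma^{1/2})^{1-s}\cdot 1^{s}=\sigma^{(1-s)/2}$, i.e. the factor $\sigma^{1-s}$ that appears after squaring in \eqref{interp_para_v}, while the domain side interpolates to $H^s$. The constants stay independent of $(\rho,u),v,w,\rho_0,u_0,f$ because all coefficient norms that enter are controlled by the small-energy bound \eqref{theorem_estimate}.

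For the $s=1$ endpoint I run the higher-order energy estimate: multiply $\mathcal{L}^j(v)=0$ by $\dot v^j$, sum in $j$, and integrate over $\mathbb{R}_+^3$. Integrating the viscous terms $-\mu\Delta v^j$ and $-\lambda(\operatorname{div}v)_{x_j}$ by parts produces $\frac{\mu}{2}\frac{d}{dt}\int|\nabla v|^2+\frac{\lambda}{2}\frac{d}{dt}\int(\operatorname{div}v)^2$, the coercive dissipation $\int\rho|\dot v|^2$, plus commutator terms arising from $\nabla\dot v=\nabla v_t+\nabla(u\cdot\nabla v)$; the latter are dominated by $\|\nabla u\|$ and absorbed using smallness. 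Integrating in time and applying Gronwall's inequality gives $\sup_t\int|\nabla v|^2+\int_0^T\!\!\int\rho|\dot v|^2\le C\|u_0\|_{H^1}^2$, the initial term being $\int|\nabla u_0|^2$.

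For the $s=0$ endpoint I test instead against $\sigma\dot v^j$. The only structural change is that the time derivative now falls on $\tfrac{\sigma}{2}\int|\nabla v|^2$, generating the extra term $-\tfrac{\sigma'}{2}\int|\nabla v|^2$; since $\sigma'=\mathbf{1}_{\{t<1\}}$, this is dominated by the \emph{basic} energy identity obtained by testing $\mathcal{L}(v)=0$ against $v$ itself, which yields $\int_0^T\!\!\int|\nabla v|^2\le C\|u_0\|_{L^2}^2$. Combining the two gives $\sup_t\sigma\int|\nabla v|^2+\int_0^T\!\!\int\sigma\rho|\dot v|^2\le C\|u_0\|_{L^2}^2$, and no boundary trace of the mere $L^2$ datum is needed precisely because $\sigma(0)=0$ kills the initial contribution.

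The delicate point, and the genuinely new feature relative to \cite{Santos}, is the treatment of the boundary integrals generated by the integrations by parts. From $-\mu\Delta v^j$ one gets $\mu\int_{\partial\mathbb{R}_+^3}v^j_{x_3}\dot v^j$; the Navier condition in \eqref{homogeneo} forces $v^3=0$ and $v^j_{x_3}=Kv^j$ for $j=1,2$ on $\partial\mathbb{R}_+^3$, so this contributes the \emph{favorable} (sign-definite, since $K>0$) boundary energy $\tfrac{\mu}{2}\frac{d}{dt}\int_{\partial}K|v|^2$ plus a tangential transport remainder $\mu\int_{\partial}Kv^j(u\cdot\nabla v^j)$, which I control by $K\in W^{1,\infty}$ and the trace and interpolation inequalities of Section \ref{preliminaries}. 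The $\lambda$-part contributes $\lambda\int_{\partial}(\operatorname{div}v)\dot v^3$, which vanishes because $v^3\equiv0$ and $u^3=0$ on $\partial\mathbb{R}_+^3$ force $\dot v^3=0$ there. Estimating the transport remainder uniformly and checking that all constants depend only on the small-energy data is the main obstacle; once this is secured, the Stein--Weiss interpolation step is routine.
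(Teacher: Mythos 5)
Your overall architecture --- endpoint estimates at $s=0$ (with weight $\sigma$) and $s=1$, linearity of the map $u_0\mapsto v$, and the real-interpolation/Stein--Weiss endgame via $H^s=(L^2,H^1)_{s,2}$ --- is exactly the paper's, including the observation that $\sigma(0)=0$ removes any initial/boundary trace requirement at the $L^2$ endpoint. The gap is inside the endpoint energy estimates, and it originates in your choice of multiplier. Testing with $\dot v^j$ (and $\sigma\dot v^j$) produces (i) interior commutator terms of the schematic form $\int_{\mathbb{R}_+^3}|\nabla u||\nabla v|^2dx$, and (ii) the boundary tangential transport remainder $\mu\int_{\partial\mathbb{R}_+^3}K^{-1}v^j(u\cdot\nabla v^j)dS_x$. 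Your claim that (i) is ``dominated by $\|\nabla u\|$ and absorbed using smallness'' does not go through: the Hoff solution carries no $L^\infty$ bound on $\nabla u$, and by \eqref{theorem_estimate} the quantity $\|\nabla u(\cdot,t)\|_2$ is uniformly controlled only with the weight $\sigma$; absorbing $\int|\nabla u||\nabla v|^2$ therefore forces you to bound $\|\nabla v\|_4$ (or $\|\nabla v\|_6$) by $\|\rho\dot v\|_2+\|\nabla v\|_2+\|v\|_2$, i.e.\ to prove the elliptic estimates for $\tilde F=(\lambda+\mu)\operatorname{div}v$ and $\tilde\omega^{j,k}=v^j_{x_k}-v^k_{x_j}$ analogous to \cite[Lemma 2.3]{HalfSpace}, adapted to the Navier condition. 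That elliptic input is the crux of the interior estimate and is entirely absent from your argument. Worse, (ii) cannot be handled ``by trace and interpolation inequalities'': integrating by parts tangentially yields $\tfrac{\mu}{2}\int_{\partial\mathbb{R}_+^3}\partial_\beta(K^{-1}u^\beta)|v|^2dS_x$, which requires a boundary trace of $\nabla u$, unavailable when $\nabla u$ is merely (weighted) $L^2$ in space; estimating it directly instead requires a trace of $\nabla v$, i.e.\ control of $D^2v$, which belongs to the next level of regularity (the paper only confronts such boundary terms in the proof of Theorem \ref{theorem 2-s}, where the dissipation $\int\sigma^m|\nabla\dot u|^2$ is available to absorb them).

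The paper sidesteps both problems by multiplying by $v_t^j$ instead: the boundary term then is exactly $-\mu\int_{\partial\mathbb{R}_+^3}K^{-1}v_t^jv^jdS_x=-\tfrac{\mu}{2}\tfrac{d}{dt}\int_{\partial\mathbb{R}_+^3}K^{-1}|v|^2dS_x$, a pure time derivative with no remainder, and the only interior cross term is $\int_{\mathbb{R}_+^3}\rho\dot v^j(u\cdot\nabla v^j)dx$, estimated by $\|\rho u\|_3\,\|\rho\dot v\|_2\,\|\nabla v\|_6$, with smallness of $\|\rho u\|_3$ obtained by interpolating the energy bound against the $L^q$ bound coming from \eqref{Mq}, and $\|\nabla v\|_6$ handled by the $\tilde F,\tilde\omega$ elliptic estimates together with \eqref{des_interpolacao}. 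If you insist on the multiplier $\dot v^j$, you must supply that elliptic machinery and a genuine treatment of the boundary remainder (for instance via the flattening identity $\int_{\partial\mathbb{R}_+^3}h\,dS_x=\int_{\{0\le x_3\le1\}}[h+(x_3-1)h_{x_3}]dx$ that the paper uses for $N_{21}$, at the cost of $D^2v$ terms); switching to $v_t^j$ is the cleaner repair. Two minor corrections: the Navier condition gives $v^j_{x_3}=K^{-1}v^j$ on $\partial\mathbb{R}_+^3$, so the boundary energy is $\int_{\partial\mathbb{R}_+^3}K^{-1}|v|^2dS_x$, not $\int_{\partial\mathbb{R}_+^3}K|v|^2dS_x$ (harmless, since $0<\underline{K}\le K\le\overline{K}$); your observation that $\dot v^3=0$ on $\partial\mathbb{R}_+^3$, which kills the $\lambda$-boundary term, is correct.
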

\begin{proof}
We shall obtain \eqref{interp_para_v} for $s=1$ when $u_0\in L^2(\mathbb{R}_+^3)$ and for $s=0$ when $u_0\in H^1(\mathbb{R}_+^3)$. Then \eqref{interp_para_v} follows by interpolation.

Multiplying the equation $\rho\dot v^j=\mu\Delta v^j+\lambda(\operatorname*{div} v)_j$ by $v_t^j$ and integrating, we obtain
$$
\begin{array}{rl}
 &\int_{\mathbb{R}_+^3}\rho|\dot v|^2dx-\int_{\mathbb{R}_+^3}\rho\dot v^ j u.\nabla v^ j dx=\mu\int_{\mathbb{R}_+^3}\Delta v^ jv_t^ jdx+\lambda\int_{\mathbb{R}_+^3}(\operatorname*{div} v)_jv_t^ jdx\\
=&-\mu\int_{\mathbb{R}_+^3}\nabla v^j.\nabla v_t^jdx+\mu\int_{\partial{\mathbb{R}_+^3}}v_t^j\nabla v^j.\nu dS_x
- \lambda\int_{\mathbb{R}_+^3} (\operatorname*{div} v)(\operatorname*{div} v)_tdx\\
&\ \ \ \ \ \ \ \ \ \ \ \ \ \ \ \ \ \ \ \ \ \ \ \ \ \ \ \ \ \ \ \ \ \ \ \ \ \ \ \ \ \ \ \ \ \ \ \ \ \ \ \ \ \ +\ \lambda\int_{\partial{\mathbb{R}_+^3}}(\operatorname*{div} v)v_t^j\nu^jdS_x\\
=&-\dfrac{\mu}{2}\dfrac{d}{dt}\int_{\mathbb{R}_+^3} |\nabla v|^ 2dx-\dfrac{\lambda}{2}\dfrac{d}{dt}\int_{\mathbb{R}_+^3}|\operatorname*{div} v|^ 2dx+\mu\int_{\partial{\mathbb{R}_+^3}}v_t^ jv_k^ j\nu^ kdS_x\\
=&-\dfrac{1}{2}\dfrac{d}{dt}\left\{\mu\int_{\mathbb{R}_+^3}|\nabla v|^2dx +\lambda\int_{\mathbb{R}_+^3} (\operatorname*{div} v)^2dx+\int_{\partial\mathbb{R}_+^3}\mu K^{-1} |v|^ 2dS_x\right\}.
\end{array}
$$
Then
$$
\begin{array}{rl}
&\dfrac{1}{2}\dfrac{d}{dt}\left(\mu||\nabla v||_2^2+\lambda||\operatorname*{div} v||_2^2+\mu\int_{\partial{\mathbb{R}_+^3}}K^{-1}|v|^2dS_x\right)  +  \int_{\mathbb{R}_+^3}\rho|\dot v|^2dx\\
=&\int_{\mathbb{R}_+^3} \rho \dot v^j(u.\nabla v^j)dx\\
\leq & C(\bar\rho) \left(\int_{\mathbb{R}_+^3}\rho|u|^3dx\right)^{1/3}
\left(\int_{\mathbb{R}_+^3}\rho|\dot v|^2dx\right)^{1/2}\left(\int_{\mathbb{R}_+^3}|\nabla v|^6dx\right)^{1/6}\\
\leq & C(\bar\rho) \|\rho u\|_2^a\|\rho u\|_q^{1-a}
\|\rho\dot v\|_2 \|\nabla v\|_6\\
\leq &C(\bar\rho)(C_0+C_f+M_q)^\theta\|\rho\dot v\|_2\|\nabla v\|_6,
\end{array}
$$
for some $a\in (0,1)$, where $q>6$ and $M_q$ are defined in \eqref{q} and
\eqref{Mq}, $\theta$ is some universal positive constant, and we used \cite[Proposition 2.1]{HalfSpace} and \eqref{theorem_estimate}.

Now defining $\tilde F=(\lambda+\mu)\operatorname*{div} v$ and $\tilde\omega^{j,k}=v_{x_k}^{j}-v_{x_j}^{k}$, we have
$(\lambda+\mu)\Delta v^j=\tilde F_{x_j}+(\lambda+\mu)\tilde\omega_{x_k}^{j,k}$
and, analogously to \cite[Lemma 2.3]{HalfSpace}, it follows the estimates
$$
\|\nabla v\|_p\leq C(\|v\|_p+\|\tilde\omega\|_p+\|\tilde F\|_p),
$$
$$
\|\nabla\tilde F\|_p+\|\nabla\tilde\omega\|_p \leq C(\|\rho \dot v\|_p+\|\nabla v\|_p+\|v\|_p),
$$
for any $p\in (1,\infty)$.
Thus by \eqref{des_interpolacao} and energy estimates we have
$$
\begin{array}{rl}
     &\int_{\mathbb{R}_+^3}\rho\dot v(u.\nabla v^j)dx\\
\leq & C\|\rho\dot v\|_2\left(\|v\|_6+\|\tilde w\|_6+\|\tilde F\|_6\right)\\
\leq & C(C_0+C_f)^\theta\|\rho\dot v\|_2\left(\|\nabla v\|_2+\|\nabla\tilde w\|_2+\|\nabla\tilde F\|_2\right)\\
\leq & C(C_0+C_f)^\theta\|\rho\dot v\|_2\left(\|\nabla v\|_2+\|\rho\dot v\|_2+\|v\|_2\right)\\
= & C(C_0+C_f)^\theta\|\rho\dot v\|_2\|\nabla v\|_2+C(C_0+C_f)^\theta\|\rho\dot v\|_2^2+C(C_0+C_f)^\theta\|\rho\dot v\|_2\|v\|_2\\
\leq & C(C_0+C_f)^\theta\|\nabla v\|_2^2+C(C_0+C_f)^\theta\|\rho\dot v\|_2^2+C\|v\|_2^2\\
= & C(C_0+C_f)^\theta\int_{\mathbb{R}_+^3}|\nabla v|^2dx+C(C_0+C_f)^\theta\int_{\mathbb{R}_+^3}\rho|\dot v|^2dx+C(C_0+C_f)^\theta\int_{\mathbb{R}_+^3}|v|^2dx
\end{array}
$$
Therefore, if $C_0,C_f$ are sufficiently small,
$$
\begin{array}{rl}
&\dfrac{1}{2}\dfrac{d}{dt}(\mu\|\nabla v\|_2^2+\lambda\|\operatorname*{div} v\|_2^2+\mu\int_{\partial{\mathbb{R}_+^3}}K^{-1}|v|^2dS_x)+\int_{\mathbb{R}_+^3}\rho|\dot v|^2\\
\leq & C\int_{\mathbb{R}_+^3} |\nabla v|^2dx+C\int_{\mathbb{R}_+^3} |v|^2dx,
\label{exp_interp}
\end{array}
$$
so integrating in $(0,t)$,
$$
\begin{array}{rl}
&\dfrac{\mu}{2}\int_{\mathbb{R}_+^3} |\nabla v|^2dx+\dfrac{\lambda}{2}\int_{\mathbb{R}_+^3}|\operatorname*{div} v|^2dx  +  \dfrac{\mu}{2}\int_{\partial{\mathbb{R}_+^3}}K^{-1}|v|^2dS_x+\int_0^T\int_{\mathbb{R}_+^3}\rho|\dot v|^2dxds\\
\leq & \dfrac{\mu}{2}\int_{\mathbb{R}_+^3}|\nabla u_0|^2dx+\dfrac{\lambda}{2}\int_{\mathbb{R}_+^3}|\operatorname*{div} u_0|^2dx
 +  \dfrac{\mu}{2}\int_{\partial{\mathbb{R}_+^3}}K^{-1}|u_0|^2dS_x+ C\int_0^T\int_{\mathbb{R}_+^3} |v|^2dxds\\
 \leq & C\|u_0\|_{H^1({\mathbb{R}_+^3})}^2,
\end{array}
$$
if $u_0\in H^1({\mathbb{R}_+^3})$. \ On the other hand, multiplying (\ref{exp_interp}) by $\sigma(t)$, we get
$$
\begin{array}{rl}
&-\dfrac{1}{2}\sigma'\left(\mu\|\nabla v\|_2^2+\lambda\|\operatorname*{div} v\|_2^2
+\mu\int_{\partial{\mathbb{R}_+^3}}K^{-1}|v|^2dS_x\right)\\
& \ \ \ \ \ \ +  \sigma\int_{\mathbb{R}_+^3}\rho|\dot v|^2dx
 +  \dfrac{1}{2}\dfrac{d}{dt}\left(\mu\sigma\|\nabla v\|_2^2+\lambda\sigma\|\operatorname*{div} v\|_2^2+\mu\alpha\sigma\int_{\partial{\mathbb{R}_+^3}}K^{-1}|v|^2dS_x\right)\\
\leq &\sigma C\int_{\mathbb{R}_+^3}|\nabla v|^ 2dx + C\int_{\mathbb{R}_+^3} |v|^2dx,
\end{array}
$$
so integrating in $(0,t)$,
$$
\begin{array}{rl}
&\sigma\dfrac{\mu}{2}\|\nabla v\|_2^2 + \sigma\dfrac{\lambda}{2}\|\operatorname*{div} v\|_2^2 + \sigma\dfrac{\mu}{2}\int_{\partial{\mathbb{R}_+^3}}K^{-1}|v|^2dS_x+\int_0^T\int_{\mathbb{R}_+^3}\sigma\rho|\dot v|^2dxds\\
\leq & \int_0^T\int_{\mathbb{R}_+^3}\sigma'|\nabla v|^2dxds + \int_0^T\int_{\mathbb{R}_+^3}\sigma'|\operatorname*{div} v|^2dxds\\
& \ \ \ +\int_0^T\int_{\partial\mathbb{R}_+^3}\sigma'K^{-1}|v|^2dxdS_x+\sigma\int_0^T\int_{\mathbb{R}_+^3}|\nabla v|^2dx+C\int_0^T\int_{\mathbb{R}_+^3} |\nabla v|^2dxds\\
\leq & C\|u_0\|_2^2,
\end{array}
$$
if $u_0\in L^2({\mathbb{R}_+^3})$. In conclusion, we have the following estimates for $v$,
$$
\sup\limits_{0\leq t\leq T} \int_{\mathbb{R}_+^3} |\nabla v|^2dx +\int_0^T\int_{\mathbb{R}_+^3}\rho|\dot v|^2dxdt \leq C\|u_0\|_{H^1({\mathbb{R}_+^3})}^2,
$$
$$
\sup\limits_{0\leq t\leq T} \sigma(t)\int_{\mathbb{R}_+^3} |\nabla v|^2dx+\int_0^T\int_{\mathbb{R}_+^3} \sigma(t)\rho|\dot v|^2dxdt \leq C\|u_0\|_{L^2({\mathbb{R}_+^3})}^2.
$$
In particular, for any fixed $t>0$, we have that the operator $u_0 \longmapsto  \nabla v$
is linear continuous from $L^2(\mathbb{R}_+^3)$ into $L^2({\mathbb{R}_+^3})$ and from $H^1({\mathbb{R}_+^3})$
into $L^2({\mathbb{R}_+^3})$ with respective norms bounded by
$C\sigma(t)^{-1/2}$ and $C$. Then by interpolation (see \cite[p. 186 and 226]{Triebel}) we obtain
$$
\sup\limits_{0\leq t\leq T} \sigma(t)^{1-s}\int_{\mathbb{R}_+^3} |\nabla v|^2dx\leq C\|u_0\|_{H^s({\mathbb{R}_+^3})}^{2}.
$$
Also, from the above estimates, we have that the operator $u_0 \longmapsto \dot v$ is linear bounded
from $L^2({\mathbb{R}_+^3})$ into $L^2((0,T)\times{\mathbb{R}_+^3},\sigma(t)dtdx)$ and from $H^1({\mathbb{R}_+^3})$
into $L^2((0,T)\times{\mathbb{R}_+^3})$. Then
$$
\int_0^T\int_{\mathbb{R}_+^3} \sigma^{1-s}(t)\rho|\dot v|^2dxdt\leq C\|u_0\|_{H^{s}({\mathbb{R}_+^3})}^2
$$
(see \cite[p. 115]{Bergh}).
\end{proof}

\begin{proposition}
\label{prop3}
For any positive number $T$ there
is a constant $C$ independent of $(\rho,u),v,w,\rho_0,u_0$ and $f$ such that
\begin{equation}
\sup\limits_{0\le t\le T}\int_{\mathbb{R}_+^3}|\nabla w|^2dx
+ \int_0^T\int_{\mathbb{R}_+^3}\rho|\dot w|^2dxdt\leq C(C_0+C_f)^\theta,
\label{interp_para_w}
\end{equation}
for some universal positive constant $\theta$.
\end{proposition}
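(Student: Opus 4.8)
The plan is to mimic the energy argument of the preceding proposition for $v$, now applied to the inhomogeneous problem \eqref{nao-homogeneo}. Because the initial datum vanishes, $w(\cdot,0)=0$, neither the weight $\sigma$ nor the interpolation theory are needed: a single energy estimate, integrated over $[0,T]$, will give \eqref{interp_para_w} directly, the null initial condition causing every time-boundary contribution to vanish at $t=0$.

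First I multiply the $j$-th equation $\rho\dot w^j=\mu\Delta w^j+\lambda(\operatorname*{div}w)_{x_j}-(P-\tilde P)_{x_j}+\rho f^j$ by $w_t^j$, sum over $j$ and integrate over $\mathbb{R}_+^3$. Writing $w_t^j=\dot w^j-u\cdot\nabla w^j$ in the term $\int\rho\dot w^jw_t^j\,dx$ and integrating the viscous terms by parts exactly as in the proof for $v$ — so that the boundary integrals generated by the Navier condition $(w^1,w^2,w^3)=K^{-1}(w^1_{x_3},w^2_{x_3},0)$ collapse into the time derivative of the positive boundary term $\mu\int_{\partial\mathbb{R}_+^3}K^{-1}|w|^2dS_x$ — I obtain, setting $E(t):=\tfrac12\big(\mu\|\nabla w\|_2^2+\lambda\|\operatorname*{div}w\|_2^2+\mu\int_{\partial\mathbb{R}_+^3}K^{-1}|w|^2dS_x\big)$, the identity
\[
E'(t)+\int_{\mathbb{R}_+^3}\rho|\dot w|^2dx=\int_{\mathbb{R}_+^3}\rho\dot w^j(u\cdot\nabla w^j)dx-\int_{\mathbb{R}_+^3}(P-\tilde P)_{x_j}w_t^j\,dx+\int_{\mathbb{R}_+^3}\rho f^jw_t^j\,dx.
\]

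I would then bound the three terms on the right. The convective integral $\int\rho\dot w^j(u\cdot\nabla w^j)dx$ is treated verbatim as for $v$: with the effective flux $F_w:=(\lambda+\mu)\operatorname*{div}w-(P-\tilde P)$, which satisfies $\Delta F_w=\operatorname*{div}(\rho(\dot w-f))$ and hence $\|\nabla F_w\|_2\le C\|\rho\dot w\|_2+C\|\rho f\|_2$, and the vorticity $\hat\omega^{j,k}=w^j_{x_k}-w^k_{x_j}$, the elliptic estimates on the half-space analogous to \cite[Lemma 2.3]{HalfSpace} yield $\|\nabla w\|_6\le C(\|\nabla w\|_2+\|\rho\dot w\|_2+\|\rho f\|_2+\|P-\tilde P\|_6)$. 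Since $\|P-\tilde P\|_6\le\|P-\tilde P\|_\infty^{2/3}\|P-\tilde P\|_2^{1/3}\le C(C_0+C_f)^\theta$ by \eqref{theorem_estimate}, the convective integral is dominated — after Young's inequality and \cite[Proposition 2.1]{HalfSpace} — by $\varepsilon\int_{\mathbb{R}_+^3}\rho|\dot w|^2dx+C(C_0+C_f+M_q)^\theta(\|\nabla w\|_2^2+1)$. For the force term I split $w_t^j=\dot w^j-u\cdot\nabla w^j$: the part $\int\rho f^j\dot w^j\,dx\le\tfrac14\int\rho|\dot w|^2dx+\int\rho|f|^2dx$ is partly absorbed (and $\int_0^T\!\int_{\mathbb{R}_+^3}\rho|f|^2\le CC_f$ by \eqref{force_energy}), while the transport part $\int\rho f^j(u\cdot\nabla w^j)dx$ is estimated like the convective integral.

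The \emph{main obstacle} is the pressure term $-\int(P-\tilde P)_{x_j}w_t^j\,dx$, which I would handle following the arguments of \cite[Lemma 3.3]{Xiangdi}. An integration by parts in space — whose boundary contribution vanishes, since $w^3=0$ and therefore $w_t^3=0$ on $\partial\mathbb{R}_+^3$ — turns it into $\int(P-\tilde P)\operatorname*{div}w_t\,dx$, and an integration by parts in time gives $\frac{d}{dt}\int(P-\tilde P)\operatorname*{div}w\,dx-\int(P-\tilde P)_t\operatorname*{div}w\,dx$. The exact time derivative is transferred to the left-hand side; it vanishes at $t=0$ (because $w(\cdot,0)=0$) and, at a later time, produces only a term $\le\varepsilon\|\nabla w\|_2^2+C\|P-\tilde P\|_2^2$ that is absorbed into $E(t)$. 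The remaining integral is rewritten through the continuity equation as $(P-\tilde P)_t=-\operatorname*{div}(Pu)+(P-\rho P'(\rho))\operatorname*{div}u$ and, after integrating in time, is controlled by $\int_0^T\|\nabla u\|_2^2dt$ and $\sup_t\|\nabla w\|_2^2$, both dominated via \eqref{theorem_estimate}. The delicate point — and the reason this computation must be done carefully — is to ensure that every resulting source term carries a genuine power of $C_0+C_f$, so that the final constant is of the form $(C_0+C_f)^\theta$ and not merely finite. With this in hand, choosing $\varepsilon$ small and using the smallness of $C_0,C_f$ to absorb the terms $\varepsilon\int\rho|\dot w|^2$ and $C(C_0+C_f+M_q)^\theta\|\nabla w\|_2^2$ into the left-hand side, I would integrate the differential inequality over $[0,t]$, take the supremum over $t\in[0,T]$ and apply Gronwall's inequality; bounding the surviving source integrals by \eqref{theorem_estimate} and \eqref{force_energy} then yields \eqref{interp_para_w}.
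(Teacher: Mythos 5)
Your proposal follows essentially the same route as the paper's proof: the energy identity obtained by multiplying \eqref{nao-homogeneo} by $w_t^j$, with the Navier boundary integral absorbed as the exact derivative of $\mu\int_{\partial\mathbb{R}_+^3}K^{-1}|w|^2\,dS_x$, the exact derivative $\frac{d}{dt}\int(P-\tilde P)\operatorname{div}w\,dx$ moved to the left, the convective and force terms controlled through the elliptic estimates analogous to \cite[Lemma 2.3]{HalfSpace} for the effective flux and vorticity of $w$, and the pressure term handled by the argument of \cite[Lemma 3.3]{Xiangdi}, ending with absorption by the smallness of $C_0,C_f$. The only cosmetic differences (invoking Gronwall at the end where the paper absorbs terms directly, and a slightly compressed account of the $\int P_t\operatorname{div}w$ estimate) do not change the substance.
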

\begin{proof}
Multiplying (\ref{nao-homogeneo}) by $w_t^j$, summing in $j$ and integrating over $\mathbb{R}_+^3$, we get
$$
\begin{array}{ll}
  &\int_{\mathbb{R}_+^3}\rho|\dot w|^2dx - \int_{\mathbb{R}_+^3}\rho\dot w^ju.\nabla w^jdx
=  -\mu\int_{\mathbb{R}_+^3}(\nabla w^j)(\nabla w^j)_tdx\\
& \ \ \ \ \ \ \ \ + \ \mu\int_{\partial\mathbb{R}_+^3} w_t^j(\nabla w^j).\nu dS(x)
-  \lambda\int_{\mathbb{R}_+^3}(\operatorname*{div} w)(\operatorname*{div} w)_tdx\\
& \ \ \ \ \ \ \ \ + \ \int_{\mathbb{R}_+^3}(P-\tilde P)(\operatorname*{div} w)_tdx
+  \int_{\mathbb{R}_+^3}\rho f^jw_t^jdx,
\end{array}
$$
thence,
$$
\begin{array}{rl}
&\int_{\mathbb{R}_+^3}\rho|\dot w|^2dx  +  \dfrac{d}{dt}(\dfrac{\mu}{2}\int_{\mathbb{R}_+^3}|\nabla w|^2dx + \dfrac{\lambda}{2}\int_{\mathbb{R}_+^3}|\operatorname*{div} w|^2dx-\int_{\mathbb{R}_+^3} (P-\tilde P)\operatorname*{div} w)\\
= & \int_{\mathbb{R}_+^3}\rho \dot w^ju.\nabla w^jdx-\int_{\mathbb{R}_+^3} P_tw_j^jdx
 +  \mu\int_{\partial\mathbb{R}_+^3} w_t^j(\nabla w^j).\nu dS(x)+\int_{\mathbb{R}_+^3} \rho f^jw_t^jdx\\
\equiv & I_1+I_2+I_3+I_4.
\end{array}
$$
Let us estimate each of these integrals $I_1,I_2,I_3,I_4$ separately.

Using estimates for $w$ analogous to those for $u$ in \cite[Lemma 2.3]{HalfSpace} and \eqref{des_interpolacao}, it is possible to show that
$$
\begin{array}{ll}
& I_1  =  \int_{\mathbb{R}_+^3}\rho\dot w^ju.\nabla w^jdx\\
\leq & C(\int_{\mathbb{R}_+^3} \rho|\dot w|^2dx)^{1/2}(\int_{\mathbb{R}_+^3}\rho|u|^3dx)^{1/3}\|\nabla w\|_6\\
 \leq & C(C_0+C_f)^\theta(\int_{\mathbb{R}_+^3}\rho|\dot w|^2dx)^{1/2}(\|\rho\dot w\|_2+\|\nabla w\|_2+\|f\|_2+\|w\|_2+\|P-\tilde P\|_6)\\
  \leq & C(C_0+C_f)^\theta + C(C_0+C_f)^\theta\int_{\mathbb{R}_+^3}\rho|\dot w|^2dx+ C(C_0+C_f)^\theta\int_{\mathbb{R}_+^3}|\nabla w|^2dx.
\end{array}
$$

Writing the identity $(\lambda+\mu)\Delta w^j=\tilde{\tilde F}_{x_j}+(\lambda+\mu)\tilde{\tilde \omega}_{x_k}^{j,k}+(P-\tilde P)_{x_j}$, with $\tilde{\tilde F}=(\lambda+\mu)\operatorname*{div} w-P(\rho)+P(\tilde\rho)$ e $\tilde{\tilde \omega}^{j,k}=w_{x_k}^j-w_{x_j}^k$, similarly to the proof of
\cite[Lemma 2.3]{HalfSpace}, we have
$$\|\nabla\tilde{\tilde F}\|_p+\|\nabla\tilde{\tilde \omega}\|_p\leq C(\|\rho\dot w\|_p+\|\nabla w\|_p+\|w\|_p+\|\rho f\|_p),$$ i.e.
$\|\nabla\tilde{\tilde{F}}\|_p = \|\nabla((\lambda+\mu)\operatorname*{div} w-(P-\tilde P))\|_p
 \leq  C(\|\rho\dot w\|_p+\|\rho f\|_p+\|\nabla w\|_p+\|w\|_p)$.
Thence, following \cite[Lemma 3.3]{Xiangdi}, we obtain
$$
\begin{array}{rl}
&I_2  =  -\int_{\mathbb{R}_+^3} P_tw_j^jdx =  -\int_{\mathbb{R}_+^3} P'(\rho)\rho_t w_j^jdx
 =  \int_{\mathbb{R}_+^3} P'(\rho)\operatorname*{div}(\rho u)w_j^jdx\\
=&  \int_{\mathbb{R}_+^3} P'(\rho)(u.\nabla\rho) w_j^jdx+\int_{\mathbb{R}_+^3} P'(\rho)\rho \operatorname*{div} u \operatorname*{div} w dx\\
 \leq &  \int_{\mathbb{R}_+^3}\nabla(P-\tilde{P})u \operatorname*{div} wdx+C\int_{\mathbb{R}_+^3}|\nabla u\|\nabla w|dx\\
 = &  \int_{\mathbb{R}_+^3} \operatorname*{div}((P-\tilde P)u)\operatorname*{div} w dx-\int_{\mathbb{R}_+^3}(P-\tilde P)(\operatorname*{div} u)(\operatorname*{div} w)dx\\
 & \ \ \ \ \ \ \ \ \ \ \ + \ C\int_{\mathbb{R}_+^3}|\nabla u\|\nabla w|dx\\
  \leq &  -\int_{\mathbb{R}_+^3}(P-\tilde P)u.\nabla(\operatorname*{div} w)dx+C\int_{\mathbb{R}_+^3}|\nabla u\|\nabla w|dx\\
 = &  -\int_{\mathbb{R}_+^3}(P-\tilde P)u.\nabla (\operatorname*{div} w-\dfrac{(P-\tilde P)}{\lambda+\mu})dx-\int_{\mathbb{R}_+^3}(P-\tilde P)u.\nabla(\dfrac{P-\tilde P}{\lambda+\mu})dx\\
 & \ \ \ \ \ \ \ \ \ \ \ + \ C\int_{\mathbb{R}_+^3}|\nabla u\|\nabla w|dx\\
  \leq &  C\int_{\mathbb{R}_+^3}|u||\nabla(\operatorname*{div} w-\dfrac{(P-\tilde P)}{\lambda+\mu})|dx-\dfrac{1}{2(\lambda+\mu)}\int_{\mathbb{R}_+^3} u.\nabla\left((P-\tilde P)^2\right)dx\\
 & \ \ \ \ \ \ \ \ \ \ \ + \ C\int_{\mathbb{R}_+^3}|\nabla u\|\nabla w|dx\\
  = & C\int_{\mathbb{R}_+^3}|u||\nabla(\operatorname*{div} w-\dfrac{(P-\tilde P)}{\lambda+\mu})|dx + C\int_{\mathbb{R}_+^3} \operatorname*{div} u(P-\tilde P)^2dx\\
 & \ \ \ \ \ \ \ \ \ \ \ + \ C\int_{\mathbb{R}_+^3}|\nabla u\|\nabla w|dx\\
  \leq & C(C_0+C_f)^\theta\int_{\mathbb{R}_+^3}|\nabla(\operatorname*{div} w-\dfrac{(P-\tilde P)}{\lambda+\mu})|^2dx+C\int_{\mathbb{R}_+^3}|\nabla u|^2dx\\
 & \ \ \ \ \ \ \ \ \ \ \ + \ C\int_{\mathbb{R}_+^3} |\nabla w|^2dx\\
  \leq &  C(C_0+C_f)^\theta+C(C_0+C_f)^\theta\int_{\mathbb{R}_+^3}\rho|\dot w|^2dx+C\int_{\mathbb{R}_+^3} |\nabla u|^2dx\\
 & \ \ \ \ \ \ \ \ \ \ \ + \ C\int_{\mathbb{R}_+^3} |\nabla w|^2dx
\end{array}
$$

Regarding $I_3$, we have
$$
\begin{array}{ll}
&I_3  =  \int_{\mathbb{R}_+^3} \rho f^jw_t^jdx
 =  \int_{\mathbb{R}_+^3}\rho f^j(\dot w^j-u.\nabla w^j)dx\\
 \leq & C(C_0+C_f)^\theta\int_{\mathbb{R}_+^3}\rho|\dot w|^2dx\\
& \ \ \ + \  C(\int_{\mathbb{R}_+^3}\rho|f|^3dx)^{1/3}(\int_{\mathbb{R}_+^3}|u|^6dx)^{1/6}(\int_{\mathbb{R}_+^3}|\nabla w|^2dx)^{1/2}\\
 \leq & C(C_0+C_f)^\theta\int_{\mathbb{R}_+^3}\rho|\dot w|^2dx\\
& \ \ \ + \ C(C_0+C_f)^\theta (\int_{\mathbb{R}_+^3}|\nabla u|^2dx)^{1/2}(\int_{\mathbb{R}_+^3}|\nabla w|^2dx)^{1/2}
\end{array}
$$

Finally,
$$
\begin{array}{rl}
& I_4  =  \mu\int_{\partial\mathbb{R}_+^3} w_t^j(\nabla w^j).\nu dS(x)
  =  \mu\int_{\partial\mathbb{R}_+^3} w_t^jw_k^j\nu^kdS(x)\\
  = & -\mu\int_{\partial\mathbb{R}_+^3} w_t^jw_3^jdS(x)
 =  -\mu\int_{\partial\mathbb{R}_+^3} K^{-1} w_t^jw^jdS(x)\\
 = & -\dfrac{\mu}{2}\int_{\partial\mathbb{R}_+^3}(K^{-1}|w|^2)_t dS(x)
 =  -\dfrac{\mu}{2}\dfrac{d}{dt}\int_{\partial\mathbb{R}_+^3} K^{-1}|w|^2dS(x)
\end{array}
$$

Therefore
$$
\begin{array}{ll}
&\int_{\mathbb{R}_+^3} \rho|\dot w|^2dx  + (\dfrac{\mu}{2}\int_{\mathbb{R}_+^3}|\nabla w|^2dx+\dfrac{\lambda}{2}\int_{\mathbb{R}_+^3}|\operatorname*{div} w|^2dx\\
& \ \ \ \ \ \ \ \ \ \ \ \ \ \ \ \ \ \ \ \ \ \ \ \ - \ \int_{\mathbb{R}_+^3}(P-\tilde P)\operatorname*{div} w dx+\dfrac{\mu}{2}\int_{\partial\mathbb{R}_+^3} K|w|^2dS(x))_t\\
  \leq & C(C_0+C_f)^\theta + C(C_0+C_f)^\theta\int_{\mathbb{R}_+^3}|\nabla u|^2 dx+ C\int_{\mathbb{R}_+^3}|\nabla u\|\nabla w|dx\\
 & \ \ \ \ \ \  + \  C(C_0+C_f)^\theta\int_{\mathbb{R}_+^3}\rho|\dot w|^2dx+C\int_{\mathbb{R}_+^3}|\nabla w|^2dx\\
 & \ \ \ \ \ \ \ \ \ \ \ + \  C(C_0+C_f)^\theta(\int_{\mathbb{R}_+^3}|\nabla u|^2dx)^{1/2}(\int_{\mathbb{R}_+^3}|\nabla w|^2dx)^{1/2}.
\end{array}
$$
Integrating in $(0,t)$ and taking $C_0,C_f$ sufficiently small, we obtain
$$
\begin{array}{ll}
&\int_0^t\int_{\mathbb{R}_+^3}\rho|\dot w|^2dxds  +  \dfrac{\mu}{2}\int_{\mathbb{R}_+^3} |\nabla w|^2dx+\dfrac{\lambda}{2}\int_{\mathbb{R}_+^3}|\operatorname*{div} w|^2dx\\
& \ \ \ \ \ \ \ \ \ \ \ \ \ \ \ \ \ \ \ \ \ \ \ \ \ \ \ \ \ \ \ \ \ \ \ \ \ \ \ \ + \ \dfrac{\mu}{2}\int_{\partial\mathbb{R}_+^3} K|w|^2dS(x)\\
 \leq & C(C_0+C_f)^\theta + \int_{\mathbb{R}_+^3}(P-\tilde P)(\operatorname*{div} w)dx\\
 & \ \ \ \ \ \ + CM_q(\int_0^t\int_{\mathbb{R}_+^3}|\nabla u|^2dxds)^{1/2}(\int_0^t\int_{\mathbb{R}_+^3} |\nabla w|^2dxds)^{1/2}\\
 \leq & C(C_0+C_f)^\theta +C(C_0+C_f)^\theta \int_{\mathbb{R}_+^3} |\nabla w|^2dx,
\end{array}
$$
hence obtaining the result, assuming again $C_0,C_f$ sufficiently small.
\end{proof}

Now we are ready to show \eqref{expoente1}, i.e. we have

\begin{theorem} The estimate \eqref{expoente1} holds for Hoff \cite{HalfSpace} solutions $(\rho,u)$ of \eqref{equations int}-\eqref{initial},
if $u_0\in H^s(\mathbb{R}_+^3)$, $0\le s\le 1$.
\label{theorem 1-s}
\end{theorem}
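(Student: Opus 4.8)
The plan is to deduce the estimate directly from the two preceding propositions through the decomposition $u=v+w$ fixed in \eqref{homogeneo}--\eqref{nao-homogeneo}. The feature that makes this assembly purely linear is that $v$ and $w$ are transported by the \emph{same} velocity field $u$ in their convective derivatives; thus from $u=v+w$ one has $\nabla u=\nabla v+\nabla w$ and $\dot u=u_t+u\cdot\nabla u=(v_t+u\cdot\nabla v)+(w_t+u\cdot\nabla w)=\dot v+\dot w$. Applying the elementary pointwise inequality $|a+b|^2\le 2|a|^2+2|b|^2$, I would bound $\sigma^{1-s}|\nabla u|^2\le 2\sigma^{1-s}|\nabla v|^2+2\sigma^{1-s}|\nabla w|^2$ and $\sigma^{1-s}\rho|\dot u|^2\le 2\sigma^{1-s}\rho|\dot v|^2+2\sigma^{1-s}\rho|\dot w|^2$, so that the functional on the left of \eqref{expoente1} splits into a $v$-part and a $w$-part.

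The $v$-part is controlled verbatim by \eqref{interp_para_v}, giving the bound $C\|u_0\|_{H^s(\mathbb{R}_+^3)}^2$. For the $w$-part I would exploit that $0\le s\le 1$ and $\sigma(t)=\min\{t,1\}\le 1$, so that $\sigma^{1-s}\le 1$ for every $t$; the weighted $w$-quantities are therefore dominated by the unweighted ones in \eqref{interp_para_w}, yielding $C(C_0+C_f)^\theta$. Adding the two contributions, and letting $T\to\infty$ (the constants supplied by both propositions being independent of $T$), I arrive at
\[
\sup_{t>0}\sigma^{1-s}\int_{\mathbb{R}_+^3}|\nabla u|^2dx+\int_0^\infty\int_{\mathbb{R}_+^3}\sigma^{1-s}\rho|\dot u|^2dxdt\le C\|u_0\|_{H^s(\mathbb{R}_+^3)}^2+C(C_0+C_f)^\theta.
\]
This yields \eqref{expoente1} once the right-hand side is rewritten in the normalized form $C(s)(C_0+\|u_0\|_{H^s}+C_f)^\theta$, with $C(s)$ and the universal exponent $\theta$ chosen as in \eqref{theorem_estimate} and \eqref{interp_para_w}.

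I do not expect a genuine analytic obstacle at this stage, since the entire weight of the theorem is carried by the two propositions already proved. The only points demanding care are the weight transfer $\sigma^{1-s}\le 1$, which is what lets the unweighted $w$-estimate feed the weighted conclusion; the linearity $\dot u=\dot v+\dot w$, valid precisely because the transport field common to $v$, $w$ and $u$ is $u$ itself; and the passage $T\to\infty$, which requires the constants in \eqref{interp_para_v} and \eqref{interp_para_w} to be uniform in $T$ in order to replace $\sup_{0\le t\le T}$ and $\int_0^T$ by $\sup_{t>0}$ and $\int_0^\infty$. The most delicate bookkeeping is reconciling the $v$-bound, which is quadratic in $\|u_0\|_{H^s}$, with the $\theta$-power form of \eqref{expoente1}; this is handled in the small-data regime by the normalization of $C(s)$ and $\theta$.
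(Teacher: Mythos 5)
Your proposal is correct and follows essentially the same route as the paper: the paper's proof of Theorem \ref{theorem 1-s} likewise just adds the bounds \eqref{interp_para_v} and \eqref{interp_para_w} through the splitting $u=v+w$, with $\sigma^{1-s}\le 1$ absorbing the weight on the $w$-part. The only step you take for granted that the paper makes explicit is the identity $u=v+w$ itself, which does not hold by construction (each of $v,w$ is defined by its own problem \eqref{homogeneo}, \eqref{nao-homogeneo}) but follows from uniqueness for the linear system $\mathcal{L}(z)=-\nabla(P-\tilde P)+\rho f$ with $z|_{t=0}=u_0$, since both $z=u$ and $z=v+w$ solve it --- your remark that $v$, $w$ and $u$ share the transport field $u$ is precisely what makes $\mathcal{L}$ linear and this uniqueness argument applicable.
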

\begin{proof}
Let $v$ and $w$ be the solutions of
\eqref{homogeneo} and \eqref{nao-homogeneo}), respectively. Since $v|_{t=0}=u_0$, by the unicity of solution of the linear system $\mathcal{L}(z)=\nabla(P-\tilde P) + \rho f$, joint with the initial condition $z_{t=0}=u_0$, we have that $u=v+w$. (Notice that $z=v+w$ and $z=u$
are both solutions of this problem.) Thus, by \ref{interp_para_v} and \ref{interp_para_w}, we obtain (\ref{expoente1}). \end{proof}

Next we shall show the estimate \eqref{expoente2}. Now we do not need to split the solution $u$ as the sum of two other functions 
 as we did above to obtain \eqref{expoente1}, but to show \eqref{expoente2} we shall use \eqref{expoente1}.

\begin{theorem} The estimate \eqref{expoente2} holds for Hoff \cite{HalfSpace} solutions $(\rho,u)$ of \eqref{equations int}-\eqref{initial},
if $u_0\in H^s(\mathbb{R}_+^3)$, $s>\frac{1}{2}$. 
\label{theorem 2-s}
\end{theorem}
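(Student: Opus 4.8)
The plan is to establish the estimate \eqref{expoente2} by a differentiated energy argument, mirroring the structure of the proof of Proposition \ref{prop3} but now working with the convective derivative $\dot u$ in place of $u$ itself. The natural starting point is to differentiate the momentum equation, written in the form $\rho\dot u^j=\mu\Delta u^j+\lambda(\operatorname{div}u)_{x_j}-(P-\tilde P)_{x_j}+\rho f^j$, along particle paths — that is, apply the operator $\partial_t+\operatorname{div}(u\,\cdot)$ or equivalently take the material derivative of the whole equation. Following the standard Hoff-type manipulation (see the arguments of \cite{Xiangdi,Hoff1995,HalfSpace}), I would multiply the resulting equation for $\dot u^j$ by $\dot u^j$, sum in $j$, and integrate over $\mathbb{R}^3_+$, producing on the left a term $\tfrac12\tfrac{d}{dt}\int\rho|\dot u|^2dx$ together with the good dissipation term $\int|\nabla\dot u|^2dx$, and on the right a collection of commutator-type integrals arising from the convective derivative acting on $\Delta u^j$ and $(\operatorname{div}u)_{x_j}$, plus the pressure and forcing contributions.

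Next I would multiply through by the temporal weight $\sigma^{2-s}$ and integrate in time over $(0,T)$. The weight is chosen precisely so that when $\tfrac{d}{dt}$ falls on $\sigma^{2-s}$ it generates a term proportional to $\sigma^{1-s}\int\rho|\dot u|^2\,dx$, which is exactly the quantity already controlled by \eqref{expoente1}; this is the mechanism by which the previously proved estimate feeds into the present one, and it explains the remark preceding the theorem that \eqref{expoente1} will be used. The commutator integrals on the right are estimated using the $L^p$ bounds for the decomposition in Proposition \ref{decomposition}, the interpolation inequality \eqref{des_interpolacao}, the imbedding \eqref{limitacao}, and the small-energy hypothesis $C_0+C_f$ small to absorb the leading quadratic terms $\int\sigma^{2-s}|\nabla\dot u|^2$ back into the left-hand dissipation. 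The forcing terms are handled using the weighted bounds on $f$ and $f_t$ built into \eqref{force_energy}, where the factors $\sigma^5|f_t|^2$ and $\sigma^7|\nabla f|$ are tailored to close exactly these weighted estimates.

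The boundary is where the genuinely new work lies, and I expect it to be the main obstacle. Differentiating in time does not commute cleanly with the Navier boundary condition $u^j_{x_3}=K^{-1}u^j$, so the integration by parts that produces the dissipation term $\int|\nabla\dot u|^2$ will generate boundary integrals over $\partial\mathbb{R}^3_+$ that must be reorganized, as in the $I_3,I_4$ computations of Proposition \ref{prop3}, into a perfect time-derivative of a boundary energy $\tfrac{d}{dt}\int_{\partial\mathbb{R}^3_+}K^{-1}|\dot u|^2\,dS$ plus controllable lower-order remainders; the fact that $\dot u$ need not itself satisfy a homogeneous Navier condition means these remainders involve $u$ and its tangential derivatives on the boundary, which one bounds using the trace estimates implicit in \eqref{limitacao} and the regularity of $K$ from \eqref{k-condition}. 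The delicate point is controlling the behavior near $t=0$: because $s>1/2$ the weight $\sigma^{2-s}$ vanishes fast enough at the origin to render all the boundary and interior remainder integrals integrable, and verifying this integrability — together with checking that the requirement $s>1/2$ (rather than merely $s\le 1$) is genuinely needed here, presumably to control a term scaling like $\sigma^{-2s}$ coming from the $\sigma^{1-s}\int\rho|\dot u|^2$ feedback against a factor of $\sigma^{-1}$ — is the technical crux. Once the boundary terms are absorbed and the small-energy smallness is invoked, a Grönwall-type closure and the time integration yield both the supremum bound on $\sigma^{2-s}\int\rho|\dot u|^2\,dx$ and the spacetime bound on $\int\int\sigma^{2-s}|\nabla\dot u|^2$, completing the proof.
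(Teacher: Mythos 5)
Your overall skeleton coincides with the paper's: apply the operator $\sigma^m\dot u^j(\partial_t(\cdot)+\operatorname*{div}(\cdot\,u))$ to the momentum equation with $m=2-s$, integrate, let the time derivative falling on the weight produce $\sigma'\sigma^{1-s}\int\rho|\dot u|^2dx$, which is exactly what \eqref{expoente1} controls, and absorb the dissipation $\mu\sigma^m\int|\nabla\dot u|^2dx$ using smallness of $C_0+C_f$. However, there is a genuine gap at the step you wave through as ``commutator integrals estimated using Proposition \ref{decomposition} and interpolation.'' After all integrations by parts, the irreducible remainder is $C\sigma^{2-s}\int_{\mathbb{R}_+^3}(|\nabla u|^4+|u|^4)dx$, and bounding $\int_0^T\sigma^{2-s}\|\nabla u\|_4^4\,d\tau$ is the actual crux of the proof. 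The paper does this following \cite[Lemma 3.3]{Xiangdi}: one bounds $\|\nabla u\|_4^4\le C\|\nabla u\|_2\bigl(\|\rho\dot u\|_2+\|\nabla u\|_2+\|u\|_2+\|f\|_2+\|P-\tilde P\|_6\bigr)^3$ via the elliptic estimates for $F$ and $\omega$, then distributes the weight as $\sigma^{(2s-1)/2}\bigl(\sigma^{1-s}\|\nabla u\|_2^2\bigr)^{1/2}\bigl(\sigma^{2-s}\int\rho|\dot u|^2dx\bigr)^{1/2}\bigl(\sigma^{1-s}\int\rho|\dot u|^2dx\bigr)$, uses \eqref{expoente1} to make the last factor integrable in time, and closes by absorbing $\sup_t\bigl(\sigma^{2-s}\int\rho|\dot u|^2dx\bigr)^{1/2}$ with the small-energy hypothesis. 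This is precisely where $s>1/2$ enters: the prefactor $\sigma^{(2s-1)/2}$ must remain bounded as $t\to0^+$. Your diagnosis of the restriction --- boundary integrability near $t=0$, a term like $\sigma^{-2s}$ against $\sigma^{-1}$ --- is misplaced; for $0\le s\le1$ the weight causes no trouble at $t=0$ anywhere except in this quartic term.

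Your boundary treatment also would not go through as described. You propose to reorganize the boundary integrals into a perfect time derivative $\frac{d}{dt}\int_{\partial\mathbb{R}_+^3}K^{-1}|\dot u|^2dS_x$, by analogy with $I_4$ in Proposition \ref{prop3}; but that analogy fails because $w$ itself satisfies the Navier condition while $\dot u$ satisfies no clean boundary condition, so no such exact derivative appears, and the $\dot u_t$ terms such a manipulation generates are not controllable. What actually works in the paper is different: using $u^j_{x_3}=K^{-1}u^j$ and $u^3=0$ on $\partial\mathbb{R}_+^3$, the quadratic boundary term emerges directly as $-\mu\sigma^m\int_{\partial\mathbb{R}_+^3}K^{-1}|\dot u|^2dS_x$ with a favorable sign and is simply discarded, while the remaining (cubic) boundary integrals are converted into interior integrals over the slab $\{0\le x_3\le1\}$ by the identity $\int_{\partial\mathbb{R}_+^3}h\,dS_x=\int_{\{0\le x_3\le1\}}[h+(x_3-1)h_{x_3}]dx$, after which they are absorbed into the same quartic interior terms discussed above. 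So while your strategy is the right one in outline, the two technical mechanisms that make it a proof --- the Xiangdi-style quartic estimate (the true source of $s>1/2$) and the sign-plus-slab-identity handling of the boundary --- are missing or incorrectly guessed.
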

\begin{proof}
Writing the momentum equation as $\rho\dot u^j+P_j=\mu\Delta u^j+\lambda \operatorname*{div} u_j+\rho f^j$,
and applying the operator $\sigma^m\dot u^j\left(\partial_t(.)+\operatorname*{div}(.u)\right)$, $m\geq 1$,
as in \cite{Hoff2002} and \cite{Xiangdi}, we have
$$
\begin{array}{ll}
&\sigma^m\rho \dot u_t^j\dot u^j+\sigma^m\rho u.\nabla\dot u^j\dot u^j + \sigma^m\dot u^jP_{jt}+\sigma^m\dot u^j \operatorname*{div}(P_j u)\\
 = & \mu\sigma^ m\dot u^j(\Delta u_t^j+\operatorname*{div}(u\Delta u^j))+\lambda\sigma^m\dot u^j(\partial_t\partial_j \operatorname*{div} u+ \operatorname*{div}(u\partial_j \operatorname*{div} u))\\
& \ \ \ \ \ \ \ \ \ \ \ \ \ \ \ \ \ \ \ \ \ \ \ \ \ \ \ \ \ \ \
  +  \sigma^m\rho\dot u^jf_t^j+\sigma^m\rho u^kf_k^j.
\end{array}
$$
Notice that
$$
\begin{array}{ll}
  &\sigma^m\rho\dot u_t^j\dot u^j+\sigma^m\rho u.\nabla\dot u^j\dot u^j  =  \dfrac{\sigma^m}{2}\left(\rho\partial_t(|\dot u|^2)+\rho u.\nabla(|\dot u|^2)\right)\\
= & \partial_t\left(\dfrac{\sigma^m}{2}\rho|\dot u|^2\right)-\dfrac{m}{2}\sigma^{m-1}\sigma'\rho|\dot u|^2-\dfrac{\sigma^m}{2}\rho_t|\dot u|^2+\dfrac{\sigma^m}{2}\rho u.\nabla(|\dot u|^2).
\end{array}
$$
Integrating in ${\mathbb{R}_+^3}$, it follows that
$$
\begin{array}{ll}
&(\dfrac{\sigma^m}{2}\int_{\mathbb{R}_+^3}\rho|\dot u|^2dx)_t-\dfrac{m}{2}\sigma'\sigma^{m-1}\int_{\mathbb{R}_+^3}\rho|\dot u|^2dx\\
= & -\sigma^m\int_{\mathbb{R}_+^3}\dot u^j\left(P_{jt}+\operatorname*{div}(P_j u)\right)dx
 +  \mu\sigma^m\int_{\mathbb{R}_+^3}\dot u^j(\Delta u_t^j+\operatorname*{div}(u\Delta u^j))dx\\
& +  \lambda\sigma^m\int_{\mathbb{R}_+^3}\dot u^j(\partial_t\partial_j \operatorname*{div} u+ \operatorname*{div}(u\partial_j \operatorname*{div} u))dx +  \sigma^m\int_{\mathbb{R}_+^3}(\rho\dot u^jf_t^j+\rho u^kf_k^j)dx\\
\equiv & \sum_{i=1}^4N_i.
\end{array}
$$
Let us estimate each of this terms $N_1,N_2,N_3,N_4$, separately.
 \ Integrating by parts,
$$
\begin{array}{ll}
&N_1=-\int_{\mathbb{R}_+^3}\sigma^m\dot u^j\left(\partial_tP_j+\operatorname*{div}(P_j u)\right)dx\\
 = & \sigma^ m\int_{\mathbb{R}_+^3}\dot u_j^jP'\rho_tdx-\int_{\partial{\mathbb{R}_+^3}}\sigma^ m\dot u^j\nu^jP_tdS_x+\sigma^ m\int_{\mathbb{R}_+^3} \dot u_k^jP_ju^k\\
& \ \ \ \ \ \ \ \ \ \ \ \ \ \ \ \ \ \ \ \ \ \ \ \ \ \ \ \ \ \ \ \ \ \ \ \ \ \ \ - \ \int_{\partial{\mathbb{R}_+^3}}\sigma^ m\dot u^jP_ju.\nu dS_x\\
= & \sigma^m\int_{\mathbb{R}_+^3}\dot u_j^jP'(-\rho \operatorname*{div} u-u.\nabla\rho)dx+\sigma^ m\int_{\mathbb{R}_+^3}\dot u_k^jP_ju^kdx\\
= & -\sigma^m\int_{\mathbb{R}_+^3} P'\rho\dot u_j^j\operatorname*{div} udx-\sigma^m\int_{\mathbb{R}_+^3}\dot u_j^ju.\nabla Pdx\\
 & - \ \sigma^m\int_{\mathbb{R}_+^3} (P-\tilde P)(\dot u_{jk}^ju^k+\dot u_k^ju_j^k)dx+\sigma^m\int_{\partial{\mathbb{R}_+^3}}(P-\tilde P)\dot u_k^ju^k\nu^jdS_x\\
 = & -\sigma^m\int_{\mathbb{R}_+^3} P'\rho\dot u_j^j\operatorname*{div} udx+\sigma^m\int_{\mathbb{R}_+^3} (P-\tilde P)(\dot u_{jk}^ju^k+\dot u_j^ju_k^k)dx\\
 & -  \sigma^m\int_{\partial{\mathbb{R}_+^3}}(P-\tilde P)(\dot u_j^ju.\nu)dS_x-\sigma^m\int_{\mathbb{R}_+^3} (P-\tilde P)(\dot u_{jk}^ju^k+\dot u_k^ju_j^k)dx\\
  = & -\sigma^ m\int_{\mathbb{R}_+^3}\left(P'\rho\dot u_j^j\operatorname*{div} u-P\dot u_j^ju_k^k+P\dot u_k^ju_j^k\right)dx\\
  \leq & C(\bar\rho)\sigma^m\|\nabla u\|_2\|\nabla\dot u\|_2\\
  \leq & C(\bar\rho)C(\varepsilon)\sigma^m\|\nabla u\|_2^2+C(\bar\rho)\varepsilon\sigma^m\|\nabla\dot u\|_2^2.
\end{array}
$$

$$
\begin{array}{ll}
& N_2  =  \mu\sigma^m\int_{\mathbb{R}_+^3}\dot u^j(\Delta u_t^j+\operatorname*{div}(u\Delta u^j))dx\\
= & \mu\sigma^m\int_{\mathbb{R}_+^3}(\dot u^ju_{kkt}^j+\dot u^j(u^ku_{ll}^j)_k)dx\\
= & -\mu\sigma^m\{\int_{\mathbb{R}_+^3}(\dot u_k^ju_{kt}^j)dx-\int_{\partial{\mathbb{R}_+^3}}\dot u^ju_{kt}^j\nu^k dS_x\}\\
  - & \mu\sigma^m\{\int_{\mathbb{R}_+^3}\dot u_k^ju^ku_{ll}^jdx-\int_{\partial{\mathbb{R}_+^3}}\dot u^ju^ku_{ll}^j\nu^k dS_x\}\\
= & -\mu\sigma^m\{\int_{\mathbb{R}_+^3} \dot u_k^j(\dot u_k^j-(u.\nabla u^j)_k)dx-\int_{\partial{\mathbb{R}_+^3}}\dot u^ju_{kt}^j\nu^k dS_x\}\\
  & + \mu\sigma^m\{\int_{\mathbb{R}_+^3}(\dot u_{kl}^ju^ku_l^j+\dot u_k^ju_l^ku_l^j)dx-\int_{\partial{\mathbb{R}_+^3}}\dot u_k^ju^ku_l^j\nu^l dS_x\}\\
= & -\mu\sigma^m\{\int_{\mathbb{R}_+^3}|\nabla\dot u|^2dx-\int_{\mathbb{R}_+^3}\dot u_k^ju_k^lu_l^jdx-\int_{\mathbb{R}_+^3} \dot u_k^ju^lu_{kl}^jdx\\
& \ \ \ \ \ \ \ \ \ \ \ \ \ \ \ \ \ \ \ \ \ \ \ \ \ \ \ - \ \int_{\partial{\mathbb{R}_+^3}} \dot u^ju_{kt}^j\nu^k dS_x\}\\
  & + \ \mu\sigma^m\{\int_{\mathbb{R}_+^3}(\dot u_{kl}^ju^ku_l^j+\dot u_k^ju_l^ku_l^j)dx-\int_{\partial{\mathbb{R}_+^3}}\dot u_k^ju^ku_l^j\nu^l dS_x\}\\
 = &  -\mu\sigma^m\{\int_{\mathbb{R}_+^3}|\nabla\dot u|^2dx-\int_{\mathbb{R}_+^3}\dot u_k^ju_k^lu_l^jdx+\int_{\mathbb{R}_+^3}( \dot u_{kl}^ju^lu_{k}^j+\dot u_k^ju_l^lu_k^j)dx\\
 & \ \ \ \ \ \ \ \ \ \ \ \ \ \ \ \ \ \ \ \ \ \ \ \ \ \ \ - \  \int_{\partial{\mathbb{R}_+^3}} \dot u^ju_{kt}^j\nu^k dS_x\}\\
  & +  \mu\sigma^m\{\int_{\mathbb{R}_+^3}(\dot u_{kl}^ju^ku_l^j+\dot u_k^ju_l^ku_l^j)dx-\int_{\partial{\mathbb{R}_+^3}}\dot u_k^ju^ku_l^j\nu^l dS_x\}\\
  = & -\mu\sigma^m\left\{\int_{\mathbb{R}_+^3}|\nabla\dot u|^2dx-\int_{\mathbb{R}_+^3}(\dot u_k^ju_k^lu_l^j-\dot u_k^ju_l^lu_k^j+\dot u_k^ju_l^ku_l^j)dx\right\}\\
  & +  \mu\sigma^m\int_{\partial{\mathbb{R}_+^3}}(\dot u^ju_{kt}^j\nu^k-\dot u_k^ju^ku_l^j\nu^l)dS_x;
\end{array}
$$
to estimate the boundary term above, we write
$$
\mu\sigma^m\int_{\partial{\mathbb{R}_+^3}}(\dot u^ju_{kt}^j\nu^k-\dot u_k^ju^ku_l^j\nu^l)dS_x\equiv N_{21}+N_{22},
$$
using that if $h\in (C^1\cap W^{1,1})(\overline{\mathbb{R}_+^3})$ then
$\int_{\partial{\mathbb{R}_+^3}}h(x)dS_x=\int_{\{0\leq x_3\leq 1\}}[h(x)+(x_3-1)h_{x_3}(x)]dx$. Observe
that we can assume $j\neq 3$ in $N_{21}$ and $k\neq 3$ in $N_{22}$ without loss of generality, since $u^3=0$ on $\partial{\mathbb{R}_+^3}$. Let us show how to estimate the term $N_{21}$ above. The term $N_{22}$ can be estimate similarly.
$$
\begin{array}{ll}
& N_{21} =  -\mu\sigma^m\int_{\partial{\mathbb{R}_+^3}}\dot u^ju_{3t}^jdS_x
=  -\mu\sigma^m\int_{\partial{\mathbb{R}_+^3}}K^{-1}\dot u^ju_t^j dS_x\\
  = & -\mu\sigma^m\int_{\partial{\mathbb{R}_+^3}}K^{-1}\dot u^j(\dot u^j-u^ku_k^j)dS_x\\
  = & -\mu\sigma^m\int_{\partial{\mathbb{R}_+^3}}K^{-1}|\dot u|^2dS_x + \mu\sigma^m\int_{\partial{\mathbb{R}_+^3}}K^{-1}\dot u^ju^ku_k^jdS_x\\
 \leq & \mu\sigma^m\int_{\partial{\mathbb{R}_+^3}}K^{-1}\dot u^ju^ku_k^jdS_x\\
  = & \mu\sigma^m\int_{\{0\leq x_3\leq 1\}}K^{-1}(\dot u^ju^ku_k^j+(x_3-1)[\dot u_3^ju^ku_k^j+\dot u^ju_3^ku_k^j+\dot u^ju^ku_{k3}^j])dx\\
  \leq & C \mu\sigma^m\int_{\mathbb{R}_+^3}(|\nabla\dot u\|u\|\nabla u|+|\dot u\|\nabla u\|u|+|\dot u\|\nabla u|^2) dx\\
 & -  \mu\sigma^m\int_{\{0\leq x_3\leq 1\}}(x_3-1)(K^{-1}\dot u_k^ju^ku_3^j+K^{-1}\dot u^ju_k^ku_3^j+(K^{-1})_k \dot u^ju^ku_3^j)dx\\
 & +  \mu\sigma^m\int_{\{x_3=0\}\cup\{x_3=1\}}K^{-1}(x_3-1)\dot u^ ju^ ku_3^j\nu^ kdS_x.
\end{array}
$$
Notice the above boundary term is null, since for $x_3=0$ we have $u^k\nu^k=0$ and for $x_3=1$ the
term $(x_3-1)$ vanishes the integrand. Thus,
$$
N_{21}\leq C \sigma^m\int_{\mathbb{R}_+^3} (|\nabla\dot u\|u\|\nabla u|+|\dot u\|\nabla u\|u|+|\dot u\|\nabla u|^2)dx.
$$

Regarding $N_3$, setting $D=\operatorname*{div} u$, we have
$$
\begin{array}{ll}
& N_3  =  \lambda\sigma^m\int_{\mathbb{R}_+^3} \dot u^j\left(\partial_t\partial_j\operatorname*{div} u+\operatorname*{div}(u\partial_j\operatorname*{div} u)\right)dx\\
= & -\lambda\sigma^m\int_{\mathbb{R}_+^3}(\dot u_j^jD_t)dx+\lambda\sigma^m\int_{\partial{\mathbb{R}_+^3}}\dot u^jD_t\nu^jdS_x\\
& \ \ \ \ \ \ \ \ \ \ \ \ \ \ \ \ \ \ \ + \ \lambda\sigma^m\int_{\mathbb{R}_+^3}\dot u^j(DD_j+u^kD_{jk})dx\\
 = &  -\lambda\sigma^m\int_{\mathbb{R}_+^3}(\dot u_j^jD_t)dx+\lambda\sigma^m\int_{\mathbb{R}_+^3}\dot u^jDD_jdx+\int_{\mathbb{R}_+^3}\dot u^ju^kD_{jk}dx\\
 \equiv & N_{31}+N_{32}+N_{33}.
\end{array}
$$
Notice that $N_{31}=-\lambda\sigma^m\int_{\mathbb{R}_+^3} \dot u_j^jD_tdx=-\lambda\sigma^m\int_{\mathbb{R}_+^3} \dot u_j^j\dot Ddx+\lambda\sigma^m\int_{\mathbb{R}_+^3}\dot u_j^ju\cdot\nabla Ddx$. For $N_{32}$ we have
$$\begin{array}{ll}
& N_{32} =  \dfrac{\lambda}{2}\sigma^m\int_{\mathbb{R}_+^3} \dot u^j(|D|^2)_jdx\\
 = & -\dfrac{\lambda}{2}\sigma^m\int_{\mathbb{R}_+^3}\dot u_j^j|D|^2dx+\dfrac{\lambda}{2}\sigma^ m\int_{\partial{\mathbb{R}_+^3}}\dot u^j\nu^j|D|^2dS_x\\
 \leq & C\sigma^m\int_{\mathbb{R}_+^3}|\nabla\dot u\|\nabla u|^2dx\\
 \leq & C\varepsilon\sigma^m\int_{\mathbb{R}_+^3}|\nabla\dot u|^2dx+C\sigma^m\int_{\mathbb{R}_+^3} |\nabla u|^4dx.
\end{array}$$

$$\begin{array}{ll}
&N_{33}=\lambda\sigma^m\int_{\mathbb{R}_+^3} u^k\dot u^jD_{jk}dx\\
 = & -\lambda\sigma^m\int_{\mathbb{R}_+^3}(\dot u_j^ju^kD_k+\dot u^ju_j^kD_k)dx+\lambda\sigma^m\int_{\partial{\mathbb{R}_+^3}}D_ku^k\dot u^j\nu^jdS_x\\
= & -\lambda\sigma^m\int_{\mathbb{R}_+^3}\dot u_j^ju.\nabla Ddx+\lambda\sigma^m\int_{\mathbb{R}_+^3}(\dot u_k^ju_j^kD+\dot u^ju_{kj}^kD)dx\\
& \ \ \ \ \ \ \ \ \ \ \ \ \ \ \ \ \ \ \ \ \ - \ \lambda\sigma^m\int_{\partial{\mathbb{R}_+^3}}\dot u^ju_j^ku_l^l\nu^kdS_x\\
 = &-\lambda\sigma^m\int_{\mathbb{R}_+^3}\dot u_j^ju.\nabla Ddx+\lambda\sigma^m\int_{\mathbb{R}_+^3} \dot u_k^ju_j^kDdx-\dfrac{\lambda}{2}\sigma^m\int_{\mathbb{R}_+^3}\dot u_j^j|D|^2dx\\
 & \ \ \ \ \ \ \ \ \ \ \ \ \ \ \ \ \ \ \ \ \ + \ \dfrac{\lambda}{2}\sigma^m\int_{\partial{\mathbb{R}_+^3}}\dot u^j\nu^j|D|^2dS_x.
\end{array}
$$
Thus,
$$
\begin{array}{ll}
& N_{31}+N_{33}\\
=& -\lambda\sigma^m\int_{\mathbb{R}_+^3}\dot u_j^j\dot Ddx+\lambda\sigma^m\int_{\mathbb{R}_+^3}\dot u_k^ju_j^kDdx-\dfrac{\lambda}{2}\sigma^m\int_{\mathbb{R}_+^3}\dot u_j^j|D|^2dx\\
\leq & -\lambda\sigma^m\int_{\mathbb{R}_+^3}\dot u_j^j\dot Ddx+\varepsilon\sigma^m\int_{\mathbb{R}_+^3}|\nabla\dot u|^2dx+C\sigma^m\int_{\mathbb{R}_+^3}|\nabla u|^4dx\\
= & -\lambda\sigma^m\int_{\mathbb{R}_+^3}\dot D(u_t^j+u.\nabla u^j)_jdx+\varepsilon\sigma^m\int_{\mathbb{R}_+^3}|\nabla\dot u|^2dx\\
& \ \ \ \ \ \ \ \ \ \ \ \ \ \ \ \ \ \ \ \ \ + \ C\sigma^m\int_{\mathbb{R}_+^3}|\nabla u|^4dx\\
= & -\lambda\sigma^m\int_{\mathbb{R}_+^3}\dot D(D_t+u.\nabla D+u_j^ku_k^j)dx+\varepsilon\sigma^m\int_{\mathbb{R}_+^3}|\nabla\dot u|^2dx\\
& \ \ \ \ \ \ \ \ \ \ \ \ \ \ \ \ \ \ \ \ \ + \ C\sigma^m\int_{\mathbb{R}_+^3}|\nabla u|^4dx\\
= & -\lambda\sigma^m\int_{\mathbb{R}_+^3}|\dot D|^2dx-\lambda\sigma^m\int_{\mathbb{R}_+^3}\dot D u_j^ku_k^jdx
 +\varepsilon\sigma^m\int_{\mathbb{R}_+^3}|\nabla\dot u|^2dx\\
 & \ \ \ \ \ \ \ \ \ \ \ \ + \ C\sigma^m\int_{\mathbb{R}_+^3}|\nabla u|^4dx\\
 \leq & -\lambda\sigma^m\int_{\mathbb{R}_+^3}|\dot D|^2dx+\varepsilon\sigma^m\int_{\mathbb{R}_+^3}|\nabla\dot u|^2dx+C\sigma^m\int_{\mathbb{R}_+^3}|\nabla u|^4dx.
\end{array}
$$

Finally,
$$
\begin{array}{ll}
& N_4  =  \sigma^m\int_{\mathbb{R}_+^3}(\rho\dot u^jf_t^j+\rho\dot u^ju^kf_k^j)dx\\
 \leq & \varepsilon\sigma^{m-1}\int_{\mathbb{R}_+^3}\rho |\dot u|^2dx+C \sigma^{m+1}\int_{\mathbb{R}_+^3}|f_t|^2dx\\
 & +  \varepsilon\sigma^{m}\int_{\mathbb{R}_+^3}\rho|\dot u|^2dx+ C \sigma^{m+1}\int_{\mathbb{R}_+^3}|\nabla f|^2|u|^2dx\\
  \leq & \varepsilon\sigma^{m-1}\int_{\mathbb{R}_+^3}\rho|\dot u|^2dx+C\sigma^{m+1}\int_{\mathbb{R}_+^3}|f_t|^2dx\\
 & +  C (\sigma^{(3-3s)/2}\int_{\mathbb{R}_+^3}|u|^4dx)^{1/2}(\sigma^{(4m+1+3s)/2}\int_{\mathbb{R}_+^3}|\nabla f|^4dx)^{1/2}\\
  \leq & \varepsilon\sigma^{m-1}\int_{\mathbb{R}_+^3}\rho|\dot u|^2dx+C\sigma^{m+1}\int_{\mathbb{R}_+^3}|f_t|^2dx\\
 & +  C(C_0+C_f)^\theta(\sigma^{(4m+1+3s)/2}\int_{\mathbb{R}_+^3}|\nabla f|^4dx)^{1/2},
\end{array}
$$
since
$
\sigma^{(3-3s)/2}\|u\|_4^4 \leq  C\sigma^{(3-3s)/2}\|u\|_2\|\nabla u\|_2^3
 \leq  C(C_0+C_f)^\theta\sigma^{(3-3s)/2}\|\nabla u\|_2^3
  =  C(C_0+C_f)^\theta(\int_{\mathbb{R}_+^3} \sigma^{1-s} |\nabla u|^2dx)^{3/2}
  \leq  C(C_0+C_f)^\theta.
$

With these estimates, we arrive at
$$
\begin{array}{ll}
&(\dfrac{\sigma^m}{2}\int_{\mathbb{R}_+^3}\rho|\dot u|^2dx)_t  -  \dfrac{m}{2}\sigma'\sigma^{m-1}\int_{\mathbb{R}_+^3}\rho|\dot u|^2dx\\
 \leq & C(\bar\rho)C(\varepsilon)\sigma^ m\int_{\mathbb{R}_+^3} |\nabla u|^2dx+C(\bar\rho)\varepsilon\sigma^m\int_{\mathbb{R}_+^3}|\nabla \dot u|^2dx\\
& -  \mu\sigma^m\int_{\mathbb{R}_+^3}|\nabla\dot u|^2dx+C\sigma^m\int_{\mathbb{R}_+^3}|\nabla u|^4dx\\
& +  C\varepsilon\sigma^m\int_{\mathbb{R}_+^3}|\nabla\dot u|^2dx+C\sigma^m\int_{\mathbb{R}_+^3}|\nabla u|^4dx-\lambda\sigma^m\int_{\mathbb{R}_+^3} |\dot D|^2dx\\
& +  C\sigma^m\int_{\mathbb{R}_+^3}|\dot u|^2dx+ C\sigma^m\int_{\mathbb{R}_+^3}|u|^4dx\\
& +  \varepsilon\sigma^{m-1}\int_{\mathbb{R}_+^3}\rho|\dot u|^2dx+C\sigma^{m+1}\int_{\mathbb{R}_+^3}|f_t|^2dx\\
 & +  C(C_0+C_f)^\theta\left(\sigma^{(4m+1+3s)/2}\int_{\mathbb{R}_+^3}|\nabla f|^4dx\right)^{1/2}.
\end{array}
$$
Integrating in $(0,T)$, taking $m=2-s$ and using \eqref{expoente1}, we obtain
$$
\begin{array}{ll}
&\sigma^m\int_{\mathbb{R}_+^3} \rho|\dot u|^2dx  +  \int_0^T\int_{\mathbb{R}_+^3}\sigma^m|\nabla\dot u|^2dxds\\
  \leq & C(C_0+C_f)^\theta+C\int_0^T\int_{\mathbb{R}_+^3}\sigma^m(|\nabla u|^4+|u|^4)dxds\\
 & +  \int_0^T\sigma^{3-s}\int_{\mathbb{R}_+^3}|f_t|^2dxdt+\int_0^T\sigma^{(9-s)/4}\left(\int_{\mathbb{R}_+^3}|\nabla f|^4dx\right)^{1/2}dt\\
  \leq & C(C_0+C_f)^\theta+C\int_0^T\int_{\mathbb{R}_+^3}\sigma^m(|\nabla u|^4+|u|^4)dxdt.
\end{array}
$$

To conclude the result we must estimate the term $\int_0^T\sigma^{2-s}\int_{\mathbb{R}_+^3}(|\nabla u|^4+|u|^4)dxd\tau$. Using \eqref{des_interpolacao}, we have
$$
\begin{array}{ll}
&\int_0^T\sigma^{2-s}\|u\|_4^4d\tau  \leq  \int_0^T\sigma^{2-s}\|u\|_2\|\nabla u\|_2^3d\tau\\
& =  \int_0^T\sigma^{2-s}(\int_{\mathbb{R}_+^3}|u|^2dx)^{1/2}(\int_{\mathbb{R}_+^3}|\nabla u|^2dx)^{3/2}d\tau\\
&\leq  C(C_0+C_f)^\theta\int_0^T\sigma^{2-s}(\int_{\mathbb{R}_+^3}|\nabla u|^2dx)^{3/2}d\tau\\
&\leq C(C_0+C_f)^\theta\int_0^T\sigma^{\frac{1+s}{2}}(\sigma^{1-s}\int_{\mathbb{R}_+^3}|\nabla u|^2dx)^{3/2}d\tau\\
&\leq C(C_0+C_f)^\theta.
\end{array}
$$
On the other hand, following \cite[Lemma 3.3]{Xiangdi}, using energy estimates and \eqref{expoente1}, we can estimate
$$
\begin{array}{ll}
     & \int_0^T\sigma^{2-s}\int_{\mathbb{R}_+^3}|\nabla u|^4dxd\tau  =  \int_0^T\sigma^{2-s}\|\nabla u\|_4^4d\tau\\
\leq & C\int_0^T\sigma^{2-s}\|\nabla u\|_2\left(\|\rho\dot u\|_2+\|\nabla u\|_2+\|u\|_2+\|f\|_2+\|P-\tilde P\|_6\right)^3d\tau\\
\leq & C\int_0^T\sigma^{2-s}\|\nabla u\|_2\left(\|\rho\dot u\|_2^3+\|\nabla u\|_2^3+\|u\|_2^3+\|f\|_2^3+\|P-\tilde P\|_6^3\right)d\tau\\
\leq & C(C_0+C_f)^\theta+C\int_0^T\sigma^{2-s}\|\nabla u\|_2\|\rho\dot u\|_2^3d\tau\\
 \leq & C(C_0+C_f)^\theta\\
 & +C\int_0^T\sigma^{2-s}(\int_{\mathbb{R}_+^3}|\nabla u|^2dx)^{1/2}(\int_{\mathbb{R}_+^3}\rho|\dot u|^2dx)^{1/2}(\int_{\mathbb{R}_+^3}\rho|\dot u|^2dx)d\tau\\
 \leq & C(C_0+C_f)^\theta +
   C\int_0^T\sigma^{\frac{2s-1}{2}}\\
   & \left(\sigma^{1-s}\int_{\mathbb{R}_+^3}|\nabla u|^2dx)^{1/2}(\sigma^{2-s}\int_{\mathbb{R}_+^3}\rho|\dot u|^2dx)^{1/2}(\sigma^{1-s}\int_{\mathbb{R}_+^3}\rho|\dot u|^2dx\right)d\tau\\
   \leq & C(C_0+C_f)^\theta +C(C_0+C_f)^\theta\sup\limits_{0\leq t\leq T}(\sigma^{2-s}\int_{\mathbb{R}_+^3}\rho|\dot u|^2dx)^{1/2}.
\end{array}
$$
Therefore, using once more that $C_0,C_f$ are sufficiently small, we obtain \eqref{expoente2}.
\end{proof}

\subsection{Proof of Theorem \ref{main}}
\label{proof of main}

The proof of Theorem \ref{main} using theorems \ref{decomposition}, \ref{main 1}
is similar to the proof of Theorem 2.5 in \cite{Santos}. Thus, here we
just give an overview of it and show some steps which may be peculiar to our case.

The proof of existence of the particle paths $X(t,x)$ satisfying \eqref{lagrangean_structure} is obtained estimating
$|X(t_1,x)-X(t_2,x)|  \leq \int_{t_1}^{t_2}||u(t,\cdot)||_\infty dt
 \leq  C\int_{t_1}^{t_2}(||u(t,\cdot)||_2+||\nabla u(t,\cdot)||_p)dt$, $p>3$,
uniformly with respect to smooth solutions, where for the last inequality we
used \eqref{limitacao}. Then proceeding with several estimates, it is possible
to show that $X(t,x)$ is H\"older continuous in $t$, uniformly with respect
to smooth solutions. Its uniqueness follows from the estimate
$\int_0^T \langle u(.,t)\rangle_{LL}dt\leq CT^\gamma$, for any $T>0$, where $C$ and $\gamma$, are positive constants, possibly depending on $T$, but not
depending on $u$, cf. \cite[lemmas 3.1 and 3.2]{Santos}
(or see \cite[Lema 3.3]{Teixeira}).

For the proof of the item (b) of Theorem \ref{main}, first we observe that
the injectivity and openness of the map $x\mapsto X(t,x)$ can be shown exactly
as in \cite{Santos}. To show the surjectivity, we use the particles paths starting at $t_0>0$, i.e. the map $X(.,x_0;t_0)\in C([0,+\infty),\overline{\mathbb{R}_+^3})\cap C^ 1((0,+\infty);\overline{\mathbb{R}_+^3})$ such that
$X(t,x_0;t_0)=x_0+\int_{t_0}^t u(X(\tau,x_0),\tau)d\tau$
(see \cite[Corollary 2.3]{Santos} or \cite[Teorema 3.4]{Teixeira}):
given $y\in\overline{\mathbb{R}_+^3}$, let $Y(s)=X(s;y,t)$, $s\in[0,t]$.
Since the curves $Y(s)$ e $X(s,Y(0))$ satisfy
$Z'(s)=u(Z(s),s), \, Z(0)=Y(0)$, we have $Y(s)=X(s;Y(0)), s\in[0,t]$, so $y=Y(t)=X(t;Y(0))$, which shows the surjectivity of the map $X(t,.):\overline{\mathbb{R}_+^3}\to\overline{\mathbb{R}_+^3}$. The continuity is a direct consequence of item (c).
To show the invariance of the boundary $\partial\mathbb{R}_+^3$ by the flux, let
$x=(x_1,x_2,0)\in\partial\mathbb{R}_+^3$. Defining $X^i(\cdot, x)$, for $i=1,2$, by
$X^i(t,x)=x_i+\int_0^t u^i(X^i(\tau,x),\tau)d\tau$, we have that
$Y(t,x):=(X^1(t,x),X^2(t,x),0)$ is a path which lies in
$\partial\mathbb{R}_+^3$ and verifies $dY(t,x)/dt=u(Y(t,x),t), \, t>0,  Y(0)=x$, since $u^3=0$ on $\partial\mathbb{R}_+^3$, so, by uniqueness (item (a)) we have $Y(t,x)=X(t,x)$, for all $t\ge0$, and thus
we conclude the invariance of the boundary by the flux $X(t,\cdot)$.

The proofs of the items (c) and (d) can be done exactly as the proofs of
\cite[Theorem 2.5 (c),(d)]{Santos} (or \cite[Teorema 3.2 (d),(f)]{Teixeira}).
Since the proof of item (d), using item (c), is very simple, for the convenience
of the reader we present it here:  Let $\mathcal{M}$ be as in the statement of item (d).
Then, by definition, $\mathcal{M}$ is the image of a map $\psi:U\to\mathbb{R}_+^3$ of
class $C^\alpha$, where $U$ is an open set of $\mathbb{R}^k$, $k=1$ or $2$.
Then $\varphi_t(x)=X(t,\psi(x))$ is a parametrization of $X(t,\mathcal{M})$ and, by item (c), we have
$|\varphi_t(x)-\varphi_t(y)|=|X(t,\psi(x))-X(t,\psi(y))|
\leq  C|\psi(x)-\psi(y)|^{e^{-LT^\gamma}}
\leq  C|x-y|^{\alpha e^{-LT^\gamma}}$,
which ends the proof.  \ \fbox

\bibliographystyle{amsplain}

\begin{thebibliography}{00}

\bibitem{Adams}R.A. Adams and J.J.F. Fournier, \textit{Sobolev Spaces}. Second edition. Pure and Applied Mathematics (Amsterdam), 140. Elsevier/Academic Press, Amsterdam, 2003.

\bibitem{Agarwal}R.P. Agarwal and V. Lakshmikantham, \textit{Uniqueness and Nonuniqueness Criteria for Ordinary Differential Equations}, World Scientific Publishing Co. Inc., Series in Real Analysis, {\bf 6}, 1993.

\bibitem{Niremberg}S. Agmon, A. Douglis and L. Nirenberg, \textit{Estimates Near the Boundary for Solutions of Elliptic Partial Differential Equations Satisfying General Boundary Conditions. I}, Communications on Pure and Applied Mathematics {\bf XII} (1959) 623-727.

\bibitem{Armitage}D.H. Armitage, \textit{The Neumann problem for a function harmonic in
$\mathbb{R}^n\times (0,\infty)$}, Arch. Rational Mech. Anal. {\bf 63} (1976), no. 1, 89-105.

\bibitem{Bergh}J. Bergh and J. L\"ofstr\"om, \textit{Interpolation Spaces. An Introduction}. Grundlehren der Mathematischen Wissenschaften, No. 223. Springer-Verlag, Berlin-New York, 1976.

\bibitem{Calderon}A.P. Calderón and A. Zygmund, \textit{On Singular Integrals}, Amer. J. Math. {\bf 78} (1956) 289--309.

\bibitem{Chemin}J.Y. Chemin, \textit{Perfect Incompressible Fluids}. Oxford Lecture Series in Mathematics and its Applications, 14. The Clarendon Press, Oxford University Press, New York, 1998.

\bibitem{Evans}L.C. Evans, \textit{Partial Differential Equations}. Second edition.  Graduate Studies in Mathematics, 19. American Mathematical Society, Providence, RI, 2010.

\bibitem{Feireisl}E. Feireisl, \textit{Dynamics of Viscous Compressible Fluids}. Oxford Lecture Series in Mathematics and its Applications, 26. Oxford University Press, Oxford, 2004.

\bibitem{Folland}G.B. Folland, \textit{Real Analysis, Modern Techniques and their Applications}, Second edition. Pure and Applied Mathematics (New York). A Wiley-Interscience Publication. John Wiley \& Sons, Inc., New York, 1999.

\bibitem{Galdi}G.P. Galdi, \textit{An Introduction to the Mathematical Theory of the Navier-Stokes Equations. Steady-state problems}. Second edition. Springer Monographs in Mathematics. Springer, New York, 2011.

\bibitem{Gilbarg}D. Gilbarg and N. Trudinger, \textit{Elliptic Partial Differential Equations of Second Order}. Classics in Mathematics. Springer-Verlag, Berlin, 2001.

\bibitem{Hoff1995}D. Hoff, \textit{Global Solutions of the Navier-Stokes Equations for Multidimensional Compressible Flow with Discontinuous Initial Data}, Journal Of Differential Equations, {\bf 120} (1995) 215-254.

\bibitem{HoffInterpolacao}D. Hoff, \textit{Discontinuous Solutions of the Navier-Stokes Equations for Multidimensional Flows of Heat-Conducting Fluids}, Arch. Rational Mech. Anal. {\bf 139} (1997) 303-354.

\bibitem{Hoff2002}D. Hoff, \textit{Dynamics of Singularity Surfaces for Compressible, Viscous Flows in Two Space Dimensions}. Comm. Pure Appl. Math. {\bf 55} (2002), no. 11, 1365-1407.

\bibitem{HalfSpace}D. Hoff,  \textit{Compressible Flow in a Half-Space with Navier Boundary Conditions}, J. Math. Fluid Mech. {\bf 7} (2005) 315-338.

\bibitem{Santos}D. Hoff and M.M. Santos, \textit{Lagrangean Structure and Propagation of Singularities in Multidimensional Compressible Flow}, Arch. Rational Mech. Anal. {\bf 188}
(2008), no. 3, 509-543.

\bibitem{Hoff-Perepelitsa}D. Hoff and M. Perepelitsa, \textit{Boundary Tangency for Density Interfaces in Compressible Viscous Fluid Flows}, Journal of Differential Equations, {\bf 253}
(2012) 3543-3567.

\bibitem{3ddomain}D. Hoff, \textit{Local solutions of a compressible flow problem with Navier boundary conditions in general three-dimensional domains}. SIAM J. Math. Anal. {\bf 44} (2012), no. 2, 633–650.

\bibitem{Javier}D. Javier, \textit{Fourier analysis}. Graduate Studies in Mathematics, 29. American Mathematical Society, Providence, RI, 2001.

\bibitem{Love}E.R. Love, \textit{The Neumann problem for a function harmonic in a half space}, Arch. Rational Mech. Anal. {\bf 53} (1973/74) 187–202.

\bibitem{Pardo}P.M. Pardo, \textit{Estrutura lagrangiana para fluidos compressíveis não barotrópicos em dimensão dois}, Thesis, IMECC-UNICAMP (2013); in portuguese).
Available at\\ 
\href{http://www.bibliotecadigital.unicamp.br/document/?code=000909785&lg=en_US}{\footnotesize http://www.bibliotecadigital.unicamp.br/document/?code=000909785\&lg=en$_{-}$US}.


\bibitem{Teixeira}E.J. Teixeira, \textit{Estrutura lagrangiana para fluidos isentr\'opicos compress\'\i veis no semiespa\c co com condi\c c\~ao de fronteira de Navier}, Thesis, IMECC-UNICAMP (2014);
in portuguese. Available at\\ \href{http://www.bibliotecadigital.unicamp.br/document/?code=000925793&lg=en_US}{\footnotesize http://www.bibliotecadigital.unicamp.br/document/?code=000925793\&lg=en$_{-}$US}.

\bibitem{Triebel}H. Triebel, \textit{Interpolation Theory, Function Spaces, Differential Operators}, North-Holland Publishing Company Amsterdam (1978).

\bibitem{Xiangdi}H. Xiangdi, L. Jing and Z. Xin, \textit{Global Well-Posedness of Classical Solutions With Large Oscillations and Vacuum to the Three-Dimensional Isentropic Compressible Navier-Stokes Equations}, Comm. Pure Appl. Math. {\bf 65} (2012), no. 4, 549-585.

\bibitem{Ziemer}W.P. Ziemer, \textit{Weakly Diferentiable Functions}, Springer-Verlag, New York,
(1989).

\bibitem{Zhang-Fang}T. Zhang and D. Fang, \textit{Compressible Flow with a Density-Dependent Viscosity Coeficient}, Siam J. Math. Anal {\bf 41} n°6 (2010) 2453-2488.

\end{thebibliography}

\end{document}